\documentclass[12pt]{amsart}
\usepackage{amsfonts}
\usepackage{amssymb,amsmath,amsfonts,verbatim}
\usepackage{graphics}
\usepackage{epsfig}
 \usepackage{color}
\usepackage{enumerate}
\usepackage{mathrsfs}
\usepackage[mathscr]{euscript}

\usepackage{pslatex}



\newtheorem{thm}{Theorem}[section]

\newtheorem{lem}[thm]{Lemma}
\newtheorem{prop}[thm]{Proposition}
\theoremstyle{definition}

\theoremstyle{remark}

\numberwithin{equation}{section}

\setlength{\oddsidemargin}{0in} \setlength{\evensidemargin}{0in}
\setlength{\textwidth}{6.3in} \setlength{\topmargin}{-0.2in}
\setlength{\textheight}{9.0in}

\newcommand{\mbb}{\mathbb}
\newcommand{\ra}{\rightarrow}
\newcommand{\z}{\zeta}
\newcommand{\pa}{\partial}
\newcommand{\ov}{\overline}
\newcommand{\sm}{\setminus}
\newcommand{\ep}{\epsilon}
\newcommand{\no}{\noindent}
\newcommand{\Om}{\Omega}
\newcommand{\cal}{\mathcal}
\newcommand{\ti}{\tilde}

\newcommand{\de}{\delta}
\newcommand{\ga}{\gamma}

\begin{document}
\title{Domains in complex surfaces with a noncompact automorphism group -- II}
\keywords{Non-compact automorphism group, finite type,
orbit accumulation point}
\thanks{The author was supported in part by the DST SwarnaJayanti Fellowship 2009-2010.}
\subjclass{Primary: 32M12 ; Secondary : 32M99}
\author{Kaushal Verma}
\address{Department of Mathematics,
Indian Institute of Science, Bangalore 560 012, India}
\email{kverma@math.iisc.ernet.in}

\begin{abstract}
Let $X$ be an arbitrary complex surface and $D \subset X$ a domain that has a noncompact group of holomorphic automorphisms. A characterization of those domains $D$ that admit 
a smooth real analytic, finite type, boundary orbit accumulation point and whose closures are contained in a complete hyperbolic domain $D' \subset X$ is obtained.
\end{abstract}

\maketitle

\section{Introduction}
\no Let $D \subset \mbb C^n$, $n \ge 1$ be a bounded domain and let ${\rm Aut}(D)$ be the group of holomorphic automorphisms of $D$. There is a natural action of ${\rm Aut}(D)$ on 
$D$ given by
\[
(f, z) \mapsto f(z)
\]
where $f \in {\rm Aut}(D)$ and $z \in D$. Suppose the orbit of some point $p \in D$ under this action accumulates at $p_{\infty} \in \pa D$ -- call such a point a boundary 
orbit accumulation point. In this situation, it has been shown that (see \cite{BP1}--\cite{BP4}, \cite{Ber}, \cite{E}, \cite{Ki}, \cite{R} and \cite{W} among others) the 
nature of $\pa D$ near $p_{\infty}$ provides global information about $D$. The question of investigating this phenomenon when $D$ is a domain in a complex manifold was raised in 
\cite{CFKW} and \cite{GKK} and it was shown in the latter article that the Wong-Rosay theorem remains valid when $D$ is a domain in an arbitrary complex manifold with 
$p_{\infty} \in \pa D$ a strongly pseudoconvex point. In short, such a domain $D$ is biholomorphic to the unit ball $\mbb B^n \subset \mbb C^n$. Motivated by this result, 
it was shown in \cite{SV} that the analogues of \cite{BP1} and \cite{Ber} are also valid, with the same conclusion, when $D$ is a domain in an arbitrary complex surface and 
$p_{\infty}$ is a smooth weakly pseudoconvex point of finite type. The pseudoconvexity hypothesis near $p_{\infty}$ was dropped in \cite{BP4} and a local version of this 
result for bounded domains in $\mbb C^2$ and with the boundary $\pa D$ near $p_{\infty}$ being smooth real analytic and of finite type can be found in \cite{V}. The purpose of 
this article is to generalise the result in \cite{V} by finding all possible model domains when $D$ is a domain in an arbitrary complex surface $X$.

\begin{thm}
Let $X$ be an arbitrary complex surface and $D \subset X$ a domain. Suppose that $\ov D$ is contained in a complete hyperbolic domain $D' \subset X$ and that there exists a point 
$p \in D$ and a sequence $\{\phi_j\} \in {\rm Aut}(D)$ such that $\{\phi_j(p)\}$ converges to $p_{\infty} \in \pa D$. Assume that the boundary of $D$ is smooth real analytic 
and of finite type near $p_{\infty}$. Then exactly one of the following alternatives holds:  
 \begin{enumerate}
	\item[(i)] If $\dim{\rm Aut}(D) = 2$ then either
          \begin{itemize}
                \item $D \backsimeq \cal D_1 = \big\{ (z_1, z_2) \in \mbb C^2 :
        2 \Re z_2 + P_1(\Re z_1) < 0 \big\}$ where $P_1$ is a
        polynomial that depends on $\Re z_1$, or

                \item $D \backsimeq \cal D_2 = \big\{ (z_1, z_2) \in \mbb C^2 :
        2 \Re z_2 + P_2(\vert z_1 \vert^2) < 0 \big\}$ where $P_2$ is a 
	polynomial that depends on $\vert z_1 \vert^2$, or

                \item $D \backsimeq \cal D_3 =
        \big\{ (z_1, z_2) \in \mbb C^2 : 2 \Re z_2 +
        P_{2m}(z_1, \ov z_1) < 0 \big\}$ where $P_{2m}$ is a
        homogeneous polynomial of degree $2m$ without harmonic terms.
          \end{itemize}

        \item[(ii)] If $\dim{\rm Aut}(D) = 3$ then $D \backsimeq \cal D_4
        = \big\{ (z_1, z_2) \in \mbb C^2 : 2 \Re z_2 + (\Re
        z_1)^{2m} < 0 \big\}$ for some integer $m \ge 2$.

        \item[(iii)] If $\dim{\rm Aut}(D) = 4$ then $D \backsimeq \cal D_5
        = \big\{ (z_1, z_2) \in \mbb C^2 : 2 \Re z_2 +
        \vert z_1 \vert^{2m} < 0 \big\}$ for some integer $m \ge 2$.

        \item[(vi)] If $\dim{\rm Aut}(D) = 8$ then $D \backsimeq \cal D_6
        = \mbb B^2$ the unit ball in $\mbb C^2$.
  \end{enumerate}
The dimensions $0, 1, 5, 6, 7$ cannot occur with $D$ as above.
\end{thm}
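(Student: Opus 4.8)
\no The plan is to reduce, by the scaling method, to the case where $D$ is biholomorphic to an explicit polynomial model in $\mbb C^2$ and then to read off the admissible automorphism dimensions from the Lie algebra of that model; the genuinely new ingredient, relative to \cite{V} and \cite{BP4}, is to carry out the scaling intrinsically on the surface $X$, and this is where the hypothesis $\ov D \subset D'$ with $D'$ complete hyperbolic does the work that boundedness does in $\mbb C^n$. First I would localize near $p_\infty$. Fix a holomorphic chart $(U, \Phi)$ centered at $p_\infty$ with $\Phi(U \cap D) \subset \mbb C^2$ a domain whose boundary is smooth real analytic and of finite type $2m$ at the origin; the finite type condition lets me bring the local defining function to the normalized form used in \cite{SV}, \cite{V}, namely $2\Re z_2 + P(z_1, \ov z_1)$ plus higher weighted order, with $P$ a real polynomial of degree $2m$ having no harmonic terms (any pluriharmonic part is absorbed into $\Re z_2$ by a holomorphic shear). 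Completeness of the Kobayashi metric on $D'$ guarantees that $\{\phi_j\}$ is a normal family on $D$ and that the orbit $\{\phi_j(p)\}$ approaches $p_\infty$ so that the dilations below are defined; this is the substitute for boundedness.

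Next I would run the Bedford--Pinchuk scaling along $q_j = \phi_j(p) \to p_\infty$. To each $q_j$ I attach a polynomial change normalizing the defining function near $q_j$, followed by an anisotropic dilation $\Lambda_j$ adapted to the finite type $2m$, so that the rescaled domains $\Lambda_j(\Phi(U \cap D))$ converge in the local Hausdorff sense to the model $\hat D = \{2\Re z_2 + P(z_1, \ov z_1) < 0\}$. By the normality supplied by $D'$, a subsequence of $\sigma_j := \Lambda_j \circ \Phi \circ \phi_j$ converges to a nonconstant holomorphic map $\sigma : D \to \hat D$; since the inverse scaling maps form a normal family as well and $\sigma$ carries the basepoint to an interior point of $\hat D$, $\sigma$ is injective and onto, hence a biholomorphism $D \backsimeq \hat D$. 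Thus $\dim {\rm Aut}(D) = \dim {\rm Aut}(\hat D)$, and it suffices to classify the rigid polynomial models $\hat D$.

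Finally I would compute $d := \dim {\rm Aut}(\hat D)$ from the graded Lie algebra $\mathfrak g$ of complete holomorphic vector fields tangent to $\pa \hat D$. Every such model carries the parabolic field $\pa / \pa(\Im z_2)$ together with a second independent field -- a weighted dilation when $P$ is homogeneous, an $\Im z_1$-translation when $P = P_1(\Re z_1)$, or a rotation when $P = P_2(\vert z_1\vert^2)$ -- so $d \ge 2$ and the values $0, 1$ are excluded. Solving the tangency equations degree by degree shows $\mathfrak g$ is graded by the weight attached to the type, and forces $P$, after a linear and polynomial normalization, into exactly one of the forms $P_1(\Re z_1)$, $P_2(\vert z_1 \vert^2)$, or a harmonic-free homogeneous $P_{2m}(z_1, \ov z_1)$; these give the three generic models $\cal D_1, \cal D_2, \cal D_3$ with $d = 2$. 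The extra translational symmetry of $(\Re z_1)^{2m}$ and the rotation-plus-dilation symmetry of $\vert z_1 \vert^{2m}$ raise $d$ to $3$ and $4$, giving $\cal D_4$ and $\cal D_5$ with $m \ge 2$, while the degenerate case $m = 1$ is strongly pseudoconvex and collapses $\hat D$ to $\mbb B^2$ with $d = 8$. The same grading produces the gap $d \notin \{5, 6, 7\}$: once the higher-weight part of $P$ is nontrivial the commutation relations bound the non-isotropic part of $\mathfrak g$, so there is no way to exceed $d = 4$ short of losing the finite-type degeneracy altogether and landing on the ball; distinct dimensions and distinct boundary geometry also make the six models pairwise inequivalent, which yields the ``exactly one'' assertion.

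The main obstacle is the scaling step on $X$. With no global coordinates one must check that the local normalization and the dilations $\Lambda_j$ interact well across the single chart $U$, that rescaling does not destroy normality on the part of $D$ away from $p_\infty$, and -- most delicately -- that the limit $\sigma$ is a biholomorphism rather than a degenerate map into $\pa \hat D$ or a proper covering. Controlling these points is exactly what forces the hypothesis that $\ov D$ lie in a \emph{complete} hyperbolic $D'$: it bounds the Kobayashi metric from below near $\pa D$, prevents escape of the orbit, and thereby secures the injectivity and surjectivity of $\sigma$.
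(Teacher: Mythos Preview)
Your scaling outline is broadly in line with the paper, but the second half---reading off the dimension directly from the Lie algebra of the polynomial model---has a genuine gap that the paper has to work around rather carefully.

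The circularity is in your claim that $d \ge 2$. You write that the model $\hat D = \{2\Re z_2 + P(z_1,\ov z_1)<0\}$ always carries a second independent field, namely ``a weighted dilation when $P$ is homogeneous, an $\Im z_1$-translation when $P = P_1(\Re z_1)$, or a rotation when $P = P_2(|z_1|^2)$.'' But the $P$ produced by scaling is just \emph{some} real polynomial of degree at most $2m$ without harmonic terms; there is no reason a priori for it to be homogeneous or to depend only on $\Re z_1$ or $|z_1|^2$. A generic such $P$ gives a model whose identity component is exactly the one-parameter group of $\Im z_2$-translations, so your trichotomy presupposes the conclusion. The paper's Proposition~2.3 rules out $\dim{\rm Aut}(D)=1$ by a completely different mechanism: it first pins down the form of every automorphism of $G$ when $\dim{\rm Aut}(G)=1$, shows that orbits in $G$ then stay a fixed distance from $\pa G$ and can only escape to the point at infinity, uses the Kobayashi blow-up of Lemma~2.1 to force the pushed-forward field $\cal X = g_\ast(i\,\pa/\pa z_2)$ to have an isolated zero at $p_\infty$, and finally invokes the parabolic-field machinery of \cite{BP4} to reach a contradiction. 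None of this is visible from the tangency equations on $\pa\hat D$ alone; the argument genuinely uses the orbit $\{\phi_j(p)\}$ on $D$ and the boundary geometry near $p_\infty$.

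The higher-dimensional cases have the same flavor of gap. For $\dim = 3,4$ you propose to ``solve the tangency equations degree by degree'' and extract the models directly, but this is essentially the content of Isaev's classifications \cite{I1}, \cite{I2} of hyperbolic surfaces with three- and four-dimensional automorphism groups, and those lists contain many candidates (tube domains and their covers, domains in $\mbb P^2$, Rossi-type domains, various $\Om_{r,\theta}$, $\mathfrak S$, $\cal E_\theta$, $E_\theta$) that are not of the form $\{2\Re z_2 + P < 0\}$ at all. The paper does not re-derive Isaev's lists; it takes them as input and eliminates every entry except $\cal D_4$ and $\cal D_5$ by exploiting the specific boundary geometry of $D$ near $p_\infty$ (existence of pseudoconcave points on the two-dimensional stratum $S\subset\cal L$, Webster algebraicity, peak functions on $\ti D$, and the weakly-spherical criterion of \cite{CPS}). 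Your graded-Lie-algebra sketch does not touch any of this, and in particular gives no argument for why $D$ cannot be biholomorphic to, say, $E_t^{(2)}$ or $\Om_{r,\theta}$. Likewise the gap $d\notin\{5,6,7\}$ in the paper comes from Kaup's theorem (dimension $\ge 5$ forces homogeneity, hence a strongly pseudoconvex accumulation point, hence $\mbb B^2$ by \cite{GKK}), not from commutation relations on the model.
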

\no To clarify several points, first note that $D$ is hyperbolic since it is contained in $D'$ which is assumed to be complete hyperbolic in the sense of Kobayashi. 
Therefore ${\rm Aut}(D)$ is a real Lie group endowed with the topology of uniform convergence on compact subsets of $D$. Moreover, the family $\phi_j : D \ra D 
\subset D'$ is normal since $D'$ is complete. By theorem 2.7 in \cite{TM} (which generalises Cartan's theorem  -- see \cite{N} pp. 78) we see that every possible limit map $\phi$ is 
either in ${\rm Aut}(D)$ or satisfies $\phi(D) \subset \pa D$. Since $\phi(p) = p_{\infty} \in \pa D$, it follows that $\phi(D) \subset \pa D$. Fix a neighbourhood $U$ of 
$p_{\infty}$ and a biholomorphism $\psi : U \ra \psi(U) \subset \mbb C^2$ such that $\psi(p_{\infty}) = 0$ and $\psi(U \cap \pa D)$ is a smooth real analytic hypersurface of finite 
type -- note that the type is a biholomorphic invariant and hence it suffices to work 
with a fixed, sufficiently small neighbourhood of $p_{\infty}$. Let $W$ be a neighbourhood of $p$ small enough so that $\phi(W) \subset U$. If 
possible, let $k > 0$ be the maximal rank of $\phi$ which is attained on the complement of an analytic set $A \subset D$. If $p \in W \sm A$, then the image of a small 
neighbourhood of $p$ that does not intersect $A$ under $\phi$ is a germ of a positive dimensional complex manifold contained in $U \cap \pa D$ and this is a contradiction. On 
the other hand if $p \in A$, pick $q \in W \sm A$ and repeat the above argument to see that $k = 0$ in this case as well. Thus $\phi(D) \equiv p_{\infty}$. Since this is true 
of any limit map, it follows that the entire sequence $\phi_j$ converges uniformly on compact subsets of $D$ to the constant map $\phi(z) \equiv p_{\infty}$. It follows that 
$D$ must be simply connected (see for example \cite{Kr}) for any loop $\gamma \subset D$ is contractible if and only if $\phi_j(\gamma)$ is so for all $j$. However, for all 
large $j$ the loop $\phi_j(\gamma) \subset U \cap D$ which is simply connected if $U$ is small enough. Hence $\phi_j(\gamma)$ is a trivial loop for large $j$ and hence so is 
$\gamma$. 

\medskip

Second, note that $\psi(p_{\infty})$ cannot belong to the envelope of holomorphy of $\psi(U \cap D)$. Indeed, for if not, then 
on the one hand we see from the above reasoning that the Jacobian determinant $\det (\psi \circ \phi_j)'$ must tend to zero uniformly on compact subsets 
of $D$. On the other hand, all the maps $\phi_j^{-1} \circ \psi^{-1} : \psi(U \cap D) \ra D \subset D'$ extend to a fixed, open neighbourhood of $\psi(p_{\infty})$ by a theorem of 
Ivashkovich (see \cite{Iv}) since $D'$ is complete. Moreover, the extensions of these maps near $\psi(p_{\infty})$ take values in $D'$. Hence there is an upper bound for $\det 
(\phi_j^{-1} \circ \psi^{-1})'$ near $\psi(p_{\infty})$ and this is a contradiction. As a consequence, this observation of Greene-Krantz is also valid in the situation of the main 
theorem.

\medskip

Third, recall the stratification of the smooth real analytic finite type hypersurface $U \cap \pa D$ that was used in \cite{V}. There is a 
biholomorphically invariant decomposition of $U \cap \pa D$ as the union of two relatively open sets, namely $\pa D^+$ (for brevity, we drop the reference to 
$\psi$) which consists of points near which 
$U \cap \pa D$ is pseudoconvex and $\hat D 
\cap \pa D$ that has those points which are in the envelope of holomorphy of $U \cap D$, and their closed complement $M_e$ which is a locally finite union of smooth real 
analytic arcs and points. Note that $M_e$ is contained in the set of Levi flat points $\cal L$ which by the finite type assumption is a codimension one real analytic subset of $U \cap 
\pa D$. By the second remark above, $p_{\infty} \notin \hat D \cap \pa D$. If $p_{\infty} \in \pa D^+$ then by \cite{SV} it follows 
that 
\[
D \backsimeq \big\{ (z_1, z_2) \in \mbb C^2 : 2 \Re z_2 + P_{2m}(z_1,\ov z_1) < 0 \big\}
\]
where $P_{2m}(z_1, \ov z_1)$ is a homogeneous subharmonic polynomial of degree $2m$ (this being the $1$-type of $U \cap \pa D$ near $p_{\infty}$) without harmonic terms. In this 
case, the assumption that $D \subset D'$ plays no role for pseudoconvexity of $U \cap \pa D$ near $p_{\infty}$ (an orbit accumulation point) is enough to guarantee that $D$ is 
complete hyperbolic -- see \cite{Ber}, \cite{Gau} for example. In particular, in the situation of the main theorem, the Levi form of $U \cap \pa D$ changes sign in every 
neighbourhood of 
$p_{\infty}$. Finally, a word about the assumption that $\ov D$ is contained in a complete hyperbolic domain $D' \subset X$. Perhaps the most natural assumption would be to {\it 
not} assume anything except finite type and smooth real analyticity of $U \cap \pa D$ near $p_{\infty}$. In this situation, the first thing to do would be to show the normality of 
$\cal O(\Delta, D)$, the family of holomorphic mappings from the unit disc $\Delta$ into $D$. And as in \cite{Ber} and \cite{Gau} this should be a consequence of 
understanding the rate of blow up of the Kobayashi metric on $D$ near $p_{\infty}$. That the metric can even be localised near $p_{\infty}$ near which the Levi form changes sign 
does not seem to be known. Therefore another possibility is to assume that $D$ is locally taut near $p_{\infty}$, i.e., $V \cap D$ is taut for some fixed neighbourhood 
$V$ of $p_{\infty}$. However, working with this also requires knowledge that an analytic disc $f : \Delta \ra D$ with $f(0)$ close to $p_{\infty}$ can be localised. 
Moreover, if we strengthen the hypothesis on $D$ by assuming that it is complete hyperbolic, then $D$ would be pseudoconvex near $p_{\infty}$. The model domains in this case have been 
determined in \cite{SV}. With these observations a plausible 
hypothesis seemed that of requiring that $\ov D \subset D'$ where $D' \subset X$ is complete -- and this, though being global in nature, seemed to complement well the assumption 
made in \cite{V} that $D \subset \mbb C^2$ is a bounded domain.

\medskip

The general strategy is the same as in \cite{V}. Note that since $D$ is hyperbolic it follows from \cite{Ka}, \cite{Ko} that $0 \le \dim {\rm Aut}(D) \le n^2 + 2n = 8$ as $n = 
2$. Furthermore by \cite{Ka} it is known that if $\dim {\rm Aut}(D) \ge 5$, then $D$ is homogeneous and hence there is an orbit that clusters at strongly pseudoconvex points in 
$U \cap \pa D$. Such points form a non-empty open subset of $U \cap \pa D$ that contains $p_{\infty}$ in its closure and this follows from the decompositon of $U \cap \pa D$ 
alluded to above. Consequently by \cite{GKK}, $D \backsimeq \mbb B^2$. Therefore it suffices to treat the case when $0 \le \dim {\rm Aut}(D) \le 4$. An initial scaling of $D$ 
using the orbit $\{\phi_j(p)\}$ as described below shows that $D$ is biholomorphic to a model domain of the form
\[
G = \big\{ (z_1, z_2) \in \mbb C^2 : 2 \Re z_2 + P(z_1, \ov z_1) < 0 \big\}
\]
where $P(z_1, \ov z_1)$ is a polynomial without harmonic terms. Let $g : G \ra D$ be the biholomorphism. $G$ is evidently invariant under the one parameter subgroup of 
translations in the imaginary $z_2$-direction, i.e., $T_t(z_1, z_2) = (z_1, z_2 + it)$ for $t \in \mbb R$. This shows that $\dim {\rm Aut}(D) \ge 1$. 
If $\dim {\rm Aut}(D) = \dim {\rm Aut}(G) = 1$, it is possible to explicitly write down what can element of ${\rm Aut}(G)$ should look like and this description shows that the 
orbits in $G$ stay uniformly away from the boundary and accumulate only at the point at infinity in $\pa G$. Using the assumption that $D$ is contained in a taut domain, it can be 
seen that the Kobayashi metric in $D$ blows up near $p_{\infty}$. Let $\cal X = g_{\ast}(i \pa / \pa z_2)$; note that $i \pa / \pa z_2$ is a holomorphic vector field in $G$ whose 
real part generates the translations $T_t$. Then $p_{\infty}$ is seen to be an isolated zero of $\cal X$ on $\pa D$ and the arguments of \cite{BP4} show that $\cal X$ must be 
parabolic and this forces $D$ to be equivalent to an ellipsoid whose automorphism group is four dimensional. This is a contradiction. When $\dim {\rm Aut}(D) = 2$, two cases arise 
depending on whether ${\rm Aut}(D)^c$, the connected component of the identity in ${\rm Aut}(D)$ is abelian or not. In the former case, ${\rm Aut}(D)^c$ must be isomorphic to 
either $\mbb R^2$ or to $\mbb R \times \mbb S^1$. These lead to the conclusion that $D \backsimeq \cal D_1$ or $D \backsimeq \cal D_2$. In the non-abelian case ${\rm Aut}(D)^c$ 
is solvable and it can be shown that $D \backsimeq \cal D_3$. A case-by-case analysis is used when $\dim {\rm Aut}(D) = 3, 4$ to identify the relevant domain from the 
classification obtained by A. V. Isaev in \cite{I1}, \cite{I2}. While the argument remains the same in some cases, we take this opportunity to streamline and provide alternate 
proofs in some instances -- for example, ruling out the possibility that $\dim {\rm Aut}(D) = 1$ and identifying the right model domain when $\dim {\rm Aut}(D) = 3$. There are several 
possibilities in \cite{I1} and here we focus on three interesting classes from that list, as the proof for the others remains the same. Nothing changes when $\dim {\rm Aut}(D) = 2, 
4$, i.e., the same proofs from \cite{V} carry over to these cases and we have decided to be brief, the emphasis being not to merely repeat what carries over to this situation 
from \cite{V}, but to identify and focus on the differences instead.


\section{The dimension of ${\rm Aut}(D)$ is at least two}

\no To describe the scaling of $D$ using the base point $p$ and the sequence $\{ \phi_j \} \in {\rm Aut}(D)$, first note that for $j$ large, there is a unique point $\ti p_j \in 
\psi(U \cap \pa D)$ such that
\begin{equation}
{\rm dist}(\psi \circ \phi_j(p), \psi(U \cap \pa D)) = \vert \ti p_j - \psi \circ \phi_j(p) \vert.
\end{equation}
By a rotation of coordinates, we may assume that the defining function $\rho(z)$ for $\psi(U \cap \pa D)$ is of the form
\[
\rho(z) = 2 \Re z_2 + \sum_{k, l} c_{kl}(y_2) z_1^k \ov z_1^l
\]
where $c_{00}(y_2) = O(y_2^2)$ and $c_{10}(y_2) = \ov c_{01}(y_2) = O(y_2)$. Let $m$ be the type of $\psi(U \cap \pa D)$ at the origin. By definition, there exist $k, l$ both at 
least one and $k + l = m$ for which $c_{kl}(0) \not= 0$ and $c_{kl}(0) = 0$ for all other $k + l < m$. The pure terms, if any, up to order $m$ in the defining function can be 
removed by a polynomial automorphism of the form
\begin{equation}
(z_1, z_2) \mapsto \big( z_1, z_2 + \sum_{k \le m} (c_{k0}(0)/2) z_1^k \big).
\end{equation}
These coordinate changes will be absorbed in $\psi$. Let $\psi^j_{p, 1}(z) = z - \ti p_j$ so that $\psi^j_{p, 1}(\ti p_j) = 0$. A unitary rotation $\psi^j_{p, 2}(z)$ then ensures 
that the outer real normal to $\psi^j_{p, 1} \circ \psi(U \cap \pa D)$ at the origin is the real $z_2$-axis. The defining function for $\psi^j_{p, 2} \circ \psi^j_{p, 1} \circ 
\psi(U \cap \pa D)$ near the origin is then of the form
\begin{equation}
\rho_j(z) = 2 \Re z_2 + \sum_{k, l} c^j_{kl}(y_2) z_1^k \ov z_1^l
\end{equation}
with the same normalisations on the coefficients $c^j_{00}(y_2)$ and $c^j_{10}(y_2)$ as described above. Since $\ti p_j \ra 0$ it follows that both $\psi^j_{p, 1}$ and $\psi^j_{p, 
2}$ converge to the identity uniformly on compact subsets of $\mbb C^2$. Note that the type of $\psi^j_{p, 2} \circ \psi^j_{p, 1} \circ \psi(U \cap \pa D)$ is at most $m$ for all 
$j$ and an automorphism of the form (2.2) can be used to remove all pure terms up to order $m$ from $\rho_j(z)$. Denote this by $\psi^j_{p, 3}$. Lastly, note that $\psi \circ 
\phi_j(p)$ is on the inner real normal to $\psi(U \cap \pa D)$ at $\ti p_j$ and it follows that $\psi^j_{p, 2} \circ \psi^j_{p, 1} \circ \psi \circ \phi_j(p) = (0, -\de_j)$ for 
some $\de_j > 0$ and the explicit form of (2.2) shows that this is unchanged by $\psi^j_{p, 3}$. Let 
\[
\psi^j_{p, 4}(z_1, z_2) = (z_1/\ep_j, z_2/\de_j)
\]
where $\ep_j > 0$ will be chosen in the next step. The defining function for $\psi^j_p \circ \psi (U \cap \pa D)$ near the origin, where $\psi^j_p = \psi^j_{p, 4} \circ \psi^j_{p, 
3} \circ \psi^j_{p, 2} \circ \psi^j_{p, 1}$, is given by
\[
\rho_{j, p}(z) = \de_j^{-1} \; \rho_j(\ep_j z_1, \de_j z_2) = 2 \Re z_2 + \sum_{k, l} \ep_j^{k + l} \de_j^{-1} c^j_{kl}(\de_j y_2) z_1^k \ov z_1^l.
\]
Observe that $\psi^j_p \circ \psi \circ \phi_j(p) = (0, -1)$ for all $j$. Now choose $\ep_j > 0$ by demanding that
\[
\max \big\{ \vert \ep_j^{k + l} \de_j^{-1} c^j_{kl}(0) \vert : k + l \le m \big\} = 1
\]
for all $j$. In particular, note that $\{ \ep_j^m \de_j^{-1} \}$ is bounded and by passing to a subsequence it follows that 
\[
\rho_{j, p}(z) \ra \rho_p = 2 \Re z_2 + P(z_1, \ov z_1)
\]
in the $C^{\infty}$ topology on compact subsets of $\mbb C^2$, where $P(z_1, \ov z_1)$ is a polynomial of degree at most $m$ without any harmonic terms. Therefore 
the domains $G_{j, p} = \psi^j_p \circ \psi(U \cap D)$ converge to
\[
G_p = \{ (z_1, z_2) \in \mbb C^2 : 2 \Re z_2 + P(z_1, \ov z_1) < 0\}
\]
in the Hausdorff sense. Let $K \subset G_p$ be a relatively compact domain containing the base point $(0, -1)$. Then $K \subset \psi^j_p \circ \psi(U \cap D)$ for all large 
$j$ and therefore the mappings
\[
g^j_p : (\psi^j_p \circ \psi \circ \phi_j)^{-1} : K \ra D \subset D'
\]
are well defined and satisfy $g^j_p(0, -1) = p$. The completeness of $D'$ shows that the family $\{g^j_p\}$ is normal and hence there is a holomorphic limit $g_p : G_p \ra \ov D$ 
with $g_p(0, -1) = p$. It remains to show that $g_p$ is a biholomorphism from $G_p$ onto $D$. For this, recall the observation made in \cite{BP4} that since $P(z_1, \ov z_1)$ is 
not harmonic, the envelope of holomorphy of $G_p$ is either all of $\mbb C^2$ or $\pa G_p$ contains a strongly pseudoconvex point. The former situation cannot hold  -- indeed, by 
\cite{Iv} again, the map $g_p$ will extend to $\mbb C^2$ taking values in $D'$ and since $D'$ is complete, $g_p(z) \equiv p$. Let $W \subset \mbb C^2$ be a bounded domain that 
intersects infinitely many of the boundaries $\psi^j_p \circ \psi(U \cap \pa D)$ -- and hence also $\pa G_p$. Then for each $j$, note that the cluster set of $W \cap \psi^j_p 
\circ \psi(U \cap \pa D)$ under $g^j_p$ is contained in $\pa D$ since $\phi_j \in {\rm Aut}(D)$. Now, if the envelope of $G_p$ were all of $\mbb C^2$, it is possible to 
find a domain $\Om$ with $\Om \cap \pa G_p \not= \emptyset$ on which the family $\{g^j_p\}$ would converge uniformly. In this case, by passing to the limit, we see that $g_p(U 
\cap \pa G_p) \subset \pa D$ and thus $g_p$ cannot be the constant map. Therefore there must be a strongly pseudoconvex point, say $\z$ on $\pa G_p$. Fix $r > 0$ so that all 
points on $\pa G_p \cap B(\z, r)$ are strongly pseudoconvex and since $\rho_{j, p} \ra \rho_{\infty, p}$ in the $C^{\infty}$ topology on $B(\z, r)$, it follows that each of the 
open pieces $\psi^j_p \circ \psi(U \cap \pa D) \cap B(\z, r)$ are themselves strongly pseudoconvex for $j \gg 1$. For a complex manifold $M$, let $F_M(z, v)$ denote the 
Kobayashi metric at $z \in M$ along a tangent vector $v$ at $z$. By the stability of the Kobayashi metric under smooth strongly pseudoconvex perturbations, it follows that for 
all $q \in B(\z, r) \cap G_p$
\[
F_{G_{j, p}}(q, v) \ge c \vert v \vert
\]
for some uniform $c > 0$ and by the invariance of the Kobayashi metric we see that
\begin{equation}
F_{\phi_j^{-1}(U \cap D)}(g^j_p(q), dg^j_p(q) v) = F_{G_{j, p}}(q, v) \ge c \vert v \vert.
\end{equation}
Since the automorphisms $\phi_j \ra p_{\infty}$ uniformly on compact subsets of $D$, it can be seen that the domains $\phi_j^{-1}(U \cap D)$ form an exhaustion of $D$ in the 
sense that for any compact $K \subset D$, there is an index $j_0$ for which $K \subset \phi_{j_0}^{-1}(U \cap D)$. Furthermore, as $g_p(0, -1) = p \in D$, it follows that 
$g_p^{-1}(\pa D)$ is closed and nowhere dense in $G_p$ and therefore it is possible to choose a $q \in (B(\z, r) \cap G_p) \sm g_p^{-1}(\pa D)$. This ensures that $g^j_p(q) \ra 
g_p(q) \in D$. Now, the completeness of $D'$ implies that the Kobayashi metrics on $\phi_j^{-1}(U \cap D)$ converge to the corresponding metric on $D$ and thus (2.4) shows that
\begin{equation}
F_D(g_p(q), dg_p(q) v) \gtrsim c \vert v \vert.
\end{equation}
Thus $dg_p(q)$ has full rank. Thus the rank of $dg_p$ can be smaller only on an analytic set $A \subset G_p$ of dimension at most one. Pick $\ti q \in A$ and let $N_1, N_2$ be 
small neighbouroods of $\ti q$ and $g_p(\ti q)$ respectively such that $g^j_p(N_1) \subset N_2$ for $j \gg 1$. By identifying $N_2$ with an open subset of $\mbb C^2$, 
Hurwitz's theorem applied to the Jacobians $\det (dg^j_p)$ shows that either $\det (dg^j_p)$ never vanishes or is identically zero in $N_1$. Since $A$ has strictly smaller 
dimension it follows that $dg_p$ has full rank everywhere, i.e., $A$ must be empty. Hence $g_p$ is locally biholomorphic in $G_p$ and therefore $g_p(G_p) \subset D$. Injectivity 
of $g_p$ is now a consequence of the fact that $g^j_p$ are all biholomorphic and they converge uniformly on compact subsets of $G_p$ to $g_p$. 

\medskip

To conclude, we have to show that 
$D_p = g_p(G_p)$ is all of $D$. If not, pick $\ti p \in \pa D_p \cap D$ and note that since $\phi_j(\ti p) \ra p_{\infty}$, the scaling argument above can be repeated to get a 
biholomorphism $g_{\ti p} : G_{\ti p} \ra g_{\ti p}(G_{\ti p}) \subset D$. Here $G_{\ti p}$ has the same form as $G_p$ with possibly a different polynomial than $P(z_1, \ov z_1)$. 
Note that $V = D_p \cap D_{\ti p}$ is then a nonempty open subset of $D$. Let $f^j_p = (g^j_p)^{-1}, f_p = g_p^{-1}$ and $f^j_{\ti p} = (g^j_{\ti p})^{-1}, f_{\ti p} = g_{\ti 
p}^{-1}$. Observe that both $f_p, f_{\ti p}$ are biholomorphic on $V$, and that both $f^j_p, f^j_{\ti p}$ are defined on a given compact set in $D$ for large $j$. We may 
write $f_p = A \circ f_{\ti p}$ where $A = g_p^{-1} \circ g_{\ti p}$ is biholomorphic on $f_{\ti p}(V)$. But more can be said about $A$ -- indeed, by definition we have
\[
g^j_p \circ \psi^j_p \circ (\psi^j_{\ti p})^{-1} = g^j_{\ti p}
\]
where $A_j = \psi^j_p \circ (\psi^j_{\ti p})^{-1}$ are polynomial automorphisms of $\mbb C^2$ of bounded degree as their construction shows. Since $g^j_p$ and $g^j_{\ti p}$ 
converge to $g_p$ and $g_{\ti p}$ respectively, we may take $A$ as the limit of $A_j$ on $f_{\ti p}(V)$ and conclude that $A$ is also a 
polynomial automorphism of $\mbb C^2$. Now the functional equation $f_p = A \circ f_{\ti p}$ extends $f_p$ as a biholomorphic mapping from a small neighbourhood $W$ of $\ti p$ 
onto $W'$, a neighbourhood of $f_p(\ti p)$. On the other hand, note first that since $g_{\ti p}$ is biholomorphic near $(0, -1)$ and maps it to $\ti p$, it follows that $f^j_{\ti 
p}$ form a normal family on $W$, after possibly shrinking it if necessary. As a consequence, the equality 
\[
f^j_p = A_j \circ f^j_{\ti p}
\]
which holds on $W$ for $j$ large, shows that $f^j_p$ converges to $f_p$ on $W$ and hence in the limit we see that $f_p(W) \subset \ov G_p$. That is, $f_p(W)$ cannot contain a 
neighbourhood of $f_p(\ti p)$ which is a contradiction. Hence $g_p : G_p \ra D$ is biholomorphic and since $G_p$ is invariant under the translations $T_t$, it follows that 
$\dim {\rm Aut}(D) = \dim {\rm Aut}(G_p) \ge 1$. In the sequel, we will write $g, G$ in place of $g_p, G_p$ respectively.

\medskip

\no Recall that $p_{\infty}$ is not in the envelope of holomorphy of $U \cap D$ where $(U, \phi)$ is the coordinate chart around $p_{\infty}$ that was fixed earlier. Let 
$\Delta \subset \mbb C$ be the unit disc. The following estimate on the Kobayashi metric near $p_{\infty}$ will be useful.

\begin{lem}
For every $r \in (0, 1)$, there is a neighbourhood $V$ of $p_{\infty}$ compactly contained in $U$ such that every analytic disc $f : \Delta \ra D$ with $f(0) \in V$ satisfies 
$f(r \Delta) \subset U$. As a result, the Kobayashi metric can be localised near $p_{\infty}$ -- there is a constant $C > 0$ such that
\[
C \cdot F_{U \cap D}(p, v) \le F_D(p, v) \le  F_{U \cap D}(p, v)
\]
uniformly for all $p \in V \cap D$ and tangent vectors $v$ at $p$. Moreover,
\[
F_D(p, v) / \vert v \vert \ra \infty
\]
as $p \ra p_{\infty}$. In particular, for any neighbourhood $V$ of $p_{\infty}$ and $R < \infty$, there exists another neighbourhood $W \subset V$ of 
$p_{\infty}$ such that the Kobayashi ball $B^k_D(p, R) \subset V$ whenever $p \in W \cap D$.
\end{lem}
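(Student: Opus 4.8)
The plan is to reduce all three assertions to a single principle: \emph{there is no non-constant holomorphic disc $h:\Delta\ra\ov D$ with $h(0)=p_{\infty}$}. Granting this, the disc-localization statement and the blow-up of the metric both follow by routine normal-family arguments, and the metric comparison and the shrinking of Kobayashi balls are then formal consequences. The only hypotheses that enter essentially are the tautness of $D'$ (which holds since $D'$ is complete hyperbolic) and the fact, recorded in the second remark above, that $p_{\infty}\notin\widehat{U\cap D}$; the finite type assumption is used to convert a putative disc into a violation of the latter.

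First I would prove the principle by contradiction. Suppose $h:\Delta\ra\ov D$ is holomorphic, non-constant, with $h(0)=p_{\infty}$, and let $\rho$ be a defining function for $D$ near $p_{\infty}$. Since $U\cap\pa D$ is of finite type it contains no germ of a complex curve, so $h$ cannot map an open set into $\pa D$, and hence $\rho\circ h<0$ off a discrete set. Finite type also forces $\rho\circ h$ to vanish to finite order at $0$; as $\rho\circ h\le 0$, the origin is an isolated zero, so for all small $\sigma>0$ the circle $h(\sigma\,\pa\Delta)$ lies in the open set $U\cap D$. The envelope $\widehat{U\cap D}$ is a domain of holomorphy, hence satisfies the continuity principle: an analytic disc whose boundary lies in $U\cap D\subset\widehat{U\cap D}$ has its whole image in $\widehat{U\cap D}$. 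Applied to $\zeta\mapsto h(\sigma\zeta)$ this gives $p_{\infty}=h(0)\in\widehat{U\cap D}$, a contradiction. I expect this to be the crux: the sign-change of the Levi form near $p_{\infty}$ rules out any local plurisubharmonic peak function and hence the usual localization scheme, and it is precisely here that the global input $\ov D\subset D'$ together with $p_{\infty}\notin\widehat{U\cap D}$ is brought to bear.

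With the principle in hand, for the disc statement suppose it failed for some $r\in(0,1)$: there would be discs $f_j:\Delta\ra D$ with $f_j(0)\ra p_{\infty}$ and points $a_j\in r\Delta$, $a_j\ra a_*$, with $f_j(a_j)\notin U$. Viewing the $f_j$ as maps into the taut manifold $D'$ and noting that their centres converge to the interior point $p_{\infty}\in D'$, they are not compactly divergent, so a subsequence converges to $h:\Delta\ra\ov D$ with $h(0)=p_{\infty}$; since $h(a_*)=\lim f_j(a_j)\in\ov D\sm U$ we have $h(a_*)\ne p_{\infty}$, so $h$ is non-constant, contradicting the principle. The same scheme, applied to discs $g_k$ with $g_k(0)=z_k\ra p_{\infty}$ realizing $F_D(z_k,v_k)\le M$ for unit vectors $v_k$, yields a limit with $|h'(0)|\ge 1/M>0$, again non-constant; this proves $F_D(p,v)/|v|\ra\infty$ as $p\ra p_{\infty}$. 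The comparison is then immediate: $F_D\le F_{U\cap D}$ by monotonicity under the inclusion $U\cap D\subset D$, while for $p\in V\cap D$ any competitor $f:\Delta\ra D$ with $f(0)=p$ has $f(r\Delta)\subset U$ by the disc statement, so $\zeta\mapsto f(r\zeta)$ is a competitor for $F_{U\cap D}$ and gives $F_{U\cap D}(p,v)\le r^{-1}F_D(p,v)$; thus one may take $C=r$.

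Finally, for the Kobayashi balls I would argue by contradiction, using the principle to collapse chains. If $B^k_D(p_k,R)\not\subset V$ for some $p_k\ra p_{\infty}$, pick $q_k\in B^k_D(p_k,R)\sm V$ and a chain of analytic discs of total Poincar\'e length $<R$ joining $p_k$ to $q_k$, all of whose vertices lie in the relatively compact set $B^k_{D'}(p_{\infty},R+1)\Subset D'$. Extracting limits disc by disc, the first disc tends to the constant $p_{\infty}$ by the principle, so its terminal vertex (whose parameter stays in $\{|\zeta|\le\tanh R\}\Subset\Delta$) also tends to $p_{\infty}$; the disc statement, read with $r=\tanh R$, prevents any single disc from leaving a fixed neighbourhood of $p_{\infty}$ within length $R$, and the blow-up of $F_D$ makes short Poincar\'e steps near $p_{\infty}$ Euclidean-small, so short steps cannot accumulate to an escape either. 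Propagating, $q_k\ra p_{\infty}$, contradicting $q_k\notin V$; hence the required $W\subset V$ exists. The quantitative propagation in this last step is the routine-but-delicate part, whereas the conceptual engine throughout is the no-disc principle of the second paragraph.
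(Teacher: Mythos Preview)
Your overall strategy matches the paper's: both reduce everything to the principle that no non-constant holomorphic disc $h:\Delta\ra\ov D$ can satisfy $h(0)=p_{\infty}$, both derive this by showing such a disc would force $p_{\infty}\in\widehat{U\cap D}$, and both obtain the localisation and blow-up of $F_D$ by the same normal-family arguments you outline.

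There is, however, a genuine gap in your derivation of the principle. From ``$\rho\circ h$ vanishes to finite order at $0$'' and ``$\rho\circ h\le 0$'' you conclude that $0$ is an isolated zero. This implication is false in general: a real-analytic function of $\z\in\Delta$ that is $\le 0$ and vanishes to finite order may well vanish along a real arc through $0$ (think of $-(\Re\z)^2\vert\z\vert^2$). Geometrically, the preimage $h^{-1}(\pa D)$ is the intersection of a complex curve with a real hypersurface and is generically one-dimensional, not discrete; finite type only rules out two-dimensional pieces. Consequently you cannot assert $h(\sigma\,\pa\Delta)\subset U\cap D$ and invoke the continuity principle for a single disc. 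The paper sidesteps this by citing Vladimirov's strong disc theorem directly: once one knows $f(\eta\Delta)\cap D\neq\emptyset$ (which follows just from finite type), that result already yields $p_{\infty}\in\widehat{U\cap D}$. Your argument is repaired by the same citation, or by pushing the disc slightly along the inward normal to produce a genuine continuous family with boundaries in $U\cap D$.

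For the last assertion about Kobayashi balls, the paper takes a much shorter route than your chain-of-discs extraction. Since $F_D(p,v)/\vert v\vert\ra\infty$, choose a neighbourhood $N_2\subset V$ on which $F_D\ge 2R\vert v\vert$, and a smaller $N_1\Subset N_2$. Any path in $D$ from $p\in N_1\cap D$ to a point outside $N_1$ must traverse a Euclidean distance at least ${\rm dist}(N_1,\pa N_2)$ inside $N_2$, hence has Kobayashi length at least $2R$ times that distance; normalising so this distance is $1$ gives $d^k_D(p,q)\ge 2R>R$ whenever $q\notin N_1$, i.e.\ $B^k_D(p,R)\subset N_1\subset V$. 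This is a two-line integral estimate and avoids the ``routine-but-delicate'' propagation along chains that you flag yourself.
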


\begin{proof}
Let $f_{\nu} : \Delta \ra D \subset D'$ be a sequence of holomorphic disks with $f_{\nu}(0) = p_{\nu} \ra p_{\infty}$. The completeness of $D'$ implies that some subsequence of 
$\{ f_{\nu} \}$ converges uniformly on compact subsets of $\Delta$ to a holomorphic limit $f : \Delta \ra \ov D$ and $f(0) = p_{\infty}$. Suppose that $f(z) \not\equiv 
p_{\infty}$ on $\Delta$. Let $\eta > 0$ be such that $f(\eta \Delta) \subset U$. Since $U \cap \pa D$ is of finite type, no open subset of $f(\eta \Delta)$ can be contained in 
it and hence $f(\eta \Delta) \cap D \not= \emptyset$. By the strong disk theorem (\cite{Vl}) it follows that $p_{\infty}$ belongs to the envelope of holomorphy of $U \cap D$ 
which is a contradiction. Therefore $f(z) \equiv p_{\infty}$ and this shows that all limit functions for the given family of holomorphic disks are constant. The first claim 
follows and the equivalence of the metrics on $U \cap D$ and $D$ is then a consequence of the definition of the Kobayashi metric. 

\medskip

If there exists a sequence $p_{\nu} \ra p_{\infty}$ and non-zero vectors $v_{\nu}$ at $p_{\nu}$ and a constant $C$ such that $F_D(p_{\nu}, v_{\nu}) \le C \vert v_{\nu} \vert$, 
then there would exist a uniform $r > 0$ and holomorphic disks $f_{\nu} \in \cal O(r \Delta, D)$ with $f_{\nu}(0) = p_{\nu}$ and $df_{\nu}(0) = v_{\nu}$. By the homogeneity 
of the metric in the vector variable, we may assume that $\vert v_{\nu} \vert = 1$ for all $\nu$. The argument above shows that every possible limit function $f$ of the 
family $\{ f_{\nu} \}$ is constant which contradicts $\vert df(0) \vert = 1$. Therefore $F_D(p, v) / \vert v \vert$ blows up as $p \ra p_{\infty}$.

\medskip

For the claim about the size of $B^k_D(p, R)$, let us work in local coordinates around $\phi(p_{\infty}) = 0$. For $a, b \in U \cap D$, let $d(a, b)$ denote the euclidean 
distance on $U \cap D$ induced by $\phi$. For a given neighbourhood $V$ of $p_{\infty}$ and $R < \infty$, 
let $p_{\infty} \in N_2 \subset V$ be such that $F_D(p, v) / \vert v \vert \ge 2 R$ for all $p \in N_2 \cap D$ and tangent vectors $v$ at $p$. We may assume without loss of 
generality that $N_2 \subset U$ and $N_2 = \phi^{-1}(B(0, 2))$. Let $N_1 = \phi^{-1}(B(0, 1))$. Fix $p \in N_1 \cap D$ and $q \in D$ and let $\gamma(t)$ be a path 
in $D$ parametrised by $[0, 1]$ with $\gamma(0) = p$ and $\gamma(1) = q$ such that
\[
\int_0^1 F_D(\ga(t), \ga'(t)) \; dt \le d^k_D(p, q) + \ep
\]
where $\ep > 0$ is given and $d^k_D(p, q)$ is the Kobayashi distance between $p, q$. Suppose that $q \in N_1 \cap D$; two cases now arise -- first, if the entire path $\gamma 
\subset N_2 \cap D$, then 
\[
2R \; d(p, q) \le \int_0^1 F_D(\ga(t), \ga'(t)) \; dt \le d^k_D(p, q) + \ep.
\]
Second, if $\gamma$ does not entirely lie in $N_2 \cap D$, then there is a connected component of $\gamma$ that contains $p$ and a point $a \in \pa N_2 \cap D$. The length of 
this connected component is at least $2 \ge d(p, q)$. On the other hand, if $q \in D \sm N_1$, then the length of this path can be bounded from below by simply $2R$. Thus we 
get
\[
d^k_D(p, q) \ge 2R \; d(p, q) - \ep
\]
if $q \in N_1 \cap D$ and $d^k_D(p, q) \ge 2 R$ otherwise. Now if $p \in N_1 \cap D$ and $q \in B^k_D(p, R)$, it follows from these comparisons that $q \in N_1 \cap D$ which 
completes the proof.
\end{proof}

\no The holomorphic vector field $\cal X = g_{\ast}(i \pa / \pa z_2)$ on $D$ is such that its real part $\Re \cal X = (\cal X + \ov{\cal X}) / 2$ generates the one parameter 
subgroup $L_t = g \circ T_t \circ g^{-1} = \exp(t \; \Re \cal X) \in {\rm Aut}(D)$. Two observations can be made about $\cal X$ at this stage -- first, Proposition 2.3 of \cite{V} 
shows that $(L_t)$ induces a local one parameter group of holomorphic automorphisms of a neighbourhood of $p_{\infty}$ when $D \subset \mbb C^2$ is a bounded domain. 
In particular, $\cal X$ extends as a holomorphic vector field near $p_{\infty}$. The proof of this relies on a local parametrised version of the reflection principle from 
\cite{DP}, the main tools being the use of Segre varieties and their invariance property under biholomorphisms to construct the desired extension of $(L_t)$ near $p_{\infty}$ 
for all $\vert t \vert < \eta$ for a fixed $\eta > 0$. The same arguments can be applied in the local coordinates induced on $U$ by $\phi$ to get the same conclusion in the 
setting of the main theorem as well. Second, consider the pullback of the orbit $\{\phi_j(p)\} \in D$ under the equivalence $g : G \ra D$, i.e., let $g^{-1} \circ \phi_j(p) = (a_j, b_j) 
\in G$ and 
\[
2 \ep_j = 2 \Re b_j + P(a_j, \ov a_j). 
\]
Note that $\ep_j < 0$ for all $j$. Proposition 2.5 of \cite{V} shows that if $\vert \ep_j \vert > c > 0$ for all large $j$, then 
$\cal X$ vanishes to finite order at $p_{\infty}$. The proof of this uses the boundedness of $D \subset \mbb C^2$ which in particular implies that a family of holomorphic maps into $D$ 
is normal. The same argument can be applied in the situation of the main theorem since $D \subset D'$ and $D'$ is assumed to be complete hyperbolic. Thus we have:

\begin{prop}
The group $(L_t)$ induces a local one parameter group of holomorphic automorphisms of a neighbourhood of $p_{\infty}$ in $X$. In particular, $\cal X$ extends as a holomorphic 
vector field near $p_{\infty}$. Moreover, if $\vert \ep_j \vert > c > 0$ for all large $j$, then $\cal X$ vanishes to finite order at $p_{\infty}$.
\end{prop}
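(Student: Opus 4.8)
The plan is to prove the two assertions separately, reducing each to its counterpart in \cite{V} — Proposition 2.3 for the extension and Proposition 2.5 for the vanishing order — and then verifying that the single place where the boundedness of $D\subset\mbb C^2$ was used there, namely the normality of families of holomorphic maps into $D$, is supplied in the present setting by the hypothesis $\ov D\subset D'$ with $D'$ complete hyperbolic. Throughout I would work in the fixed chart $\phi$ around $p_{\infty}$, in which $U\cap\pa D$ is a smooth real analytic hypersurface of finite type with defining function $\rho$ real analytic near $0=\phi(p_{\infty})$. Since all three conclusions (the local group, the holomorphic extension of $\cal X$, and the finite order of vanishing) are statements about the germ of $D$ at $p_{\infty}$, transplanting them to $X$ through $\phi$ introduces nothing genuinely new.

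For the local one-parameter group and the extension of $\cal X$, I would follow Proposition 2.3 of \cite{V}. Complexifying the real analytic defining function $\rho$ yields the associated Segre varieties $Q_w$, and the crucial structural input is their biholomorphic invariance: a local biholomorphism carrying one boundary germ onto another maps Segre varieties to Segre varieties. Since $L_t\to\mathrm{id}$ as $t\to0$ and $L_t=g\circ T_t\circ g^{-1}$ with $T_t$ fixing the point at infinity of $G$, for $|t|$ below a fixed $\eta>0$ the automorphism $L_t$ has boundary behaviour near $p_{\infty}$ sending a boundary germ at $p_{\infty}$ into one at $p_{\infty}$. The parametrised reflection principle of \cite{DP} then extends each $L_t$ holomorphically to a full neighbourhood of $p_{\infty}$, with holomorphic dependence on $t$ and with the group law $L_s\circ L_t=L_{s+t}$ preserved on the overlap, producing a local one-parameter group for $|t|<\eta$. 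Differentiating at $t=0$ extends $\Re\cal X$, and hence $\cal X=g_{\ast}(i\pa/\pa z_2)$, as a holomorphic vector field across $\pa D$ near $p_{\infty}$. As this construction is local and real analytic, it is insensitive to whether the ambient space is $\mbb C^2$ or the surface $X$ and transplants to $X$ through the chart.

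For the finite-order statement, assume $|\ep_j|>c>0$ for all large $j$, where $2\ep_j=2\Re b_j+P(a_j,\ov a_j)$ records the value of the defining function of $G$ at the pullback $(a_j,b_j)=g^{-1}\circ\phi_j(p)$. The geometric content is that the orbit $\{\phi_j(p)\}$, while converging to $p_{\infty}$, keeps this defining function bounded away from $0$ in the $G$-model, i.e.\ it stays at a definite distance from $\pa G$. Following Proposition 2.5 of \cite{V}, I would argue by contradiction. Were $\cal X$ to vanish to infinite order at $p_{\infty}$, its local flow $L_t$ would displace points near $p_{\infty}$ arbitrarily slowly, and a suitably rescaled family of flow maps extracted along the orbit would, in the limit, force the defining-function value down to $0$, contradicting $|\ep_j|>c$. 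The compactness required to extract such a limit is precisely where \cite{V} invoked the boundedness of $D$; in the present setting it is furnished by $\ov D\subset D'$ with $D'$ complete hyperbolic, since a family of holomorphic maps into a relatively compact part of a complete hyperbolic domain is normal. Hence $\cal X$ vanishes to finite order at $p_{\infty}$.

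I expect the finite-order argument to be the main obstacle, rather than the reflection-principle extension. Two points require care there: making precise the passage from the quantitative hypothesis $|\ep_j|>c$ to a contradiction with infinite-order vanishing, and checking that every compactness step which \cite{V} derived from the boundedness of $D$ genuinely survives the replacement of boundedness by $\ov D\subset D'$. The extension of $(L_t)$ and of $\cal X$, by contrast, is a purely local real analytic construction and carries over to the surface $X$ once the chart $\phi$ is fixed, with no essentially new difficulty.
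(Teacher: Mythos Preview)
Your proposal is correct and mirrors the paper's own treatment almost verbatim: the paper likewise reduces the first assertion to Proposition~2.3 of \cite{V} (Segre varieties plus the parametrised reflection principle of \cite{DP}, applied in the local chart $\phi$) and the second to Proposition~2.5 of \cite{V}, observing that the only use of boundedness there is to guarantee normality of families of maps into $D$, which is here supplied by $\ov D\subset D'$ with $D'$ complete hyperbolic. There is nothing to add.
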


\no The next step is to describe what the elements of ${\rm Aut}(G)$ look like under the assumption that $\dim {\rm Aut}(G) = 1$. This calculation was done in Propositions 2.6 and 2.7 
of \cite{V} and they remain valid here since they do not involve any features of $D$. The conclusion is that if $g \in {\rm Aut}(G)$ then
\[
g(z_1, z_2) = (g_1(z_1, z_2), g_2(z_1, z_2)) = (\alpha z_1 + \beta, \phi(z_1) + a z_2)
\]
where $\vert \alpha \vert = 1, a = \pm 1, \beta \in \mbb C$ and $\phi(z_1)$ is a holomorphic polynomial. Moreover, if $q = (q_1, q_2) \in G$, $g \in {\rm Aut}(G)$ and 
\[
E = 2 \Re(g_2(q_1, q_2)) + P(g_1(q_1, q_2), \ov{g_1(q_1, q_2)})
\]
then $\vert E \vert = \vert 2 \Re q_2 + P(q_1, \ov q_1) \vert$ as Lemma 2.8 of \cite{V} shows. Hence $\vert E \vert$ is independent of $g$.

\begin{prop}
The dimension of ${\rm Aut}(D)$ is at least two.
\end{prop}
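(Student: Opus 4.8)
The plan is to argue by contradiction: suppose $\dim {\rm Aut}(D) = 1$, so that $\dim {\rm Aut}(G) = 1$ as well and the explicit description of ${\rm Aut}(G)$ recorded above is in force. Write $\Phi_j = g^{-1} \circ \phi_j \circ g \in {\rm Aut}(G)$, so that the pulled-back orbit is $\Phi_j(0, -1) = g^{-1} \circ \phi_j(p) = (a_j, b_j)$. Since $P$ has no harmonic terms we have $P(0, 0) = 0$, and hence $2 \Re z_2 + P(z_1, \ov z_1)$ takes the value $-2$ at the base point $(0, -1)$. Applying the invariance of $\vert E \vert$ under ${\rm Aut}(G)$ (Lemma 2.8 of \cite{V}) to each $\Phi_j$ gives $\vert 2 \Re b_j + P(a_j, \ov a_j) \vert = 2$, that is, $\vert \ep_j \vert = 1$ for every $j$. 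Geometrically this says the orbit stays on the single level hypersurface $\{2 \Re z_2 + P = -2\}$; although $(a_j, b_j)$ must leave every compact subset of $G$ because $g$ is proper and $\phi_j(p) \to p_{\infty} \in \pa D$, it remains uniformly away from the finite part of $\pa G$ and can only run off to the point at infinity of $\pa G$.

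In particular $\vert \ep_j \vert = 1 > c$ for any fixed $c \in (0, 1)$, so the hypothesis of the last clause of Proposition 2.2 is satisfied and $\cal X$ vanishes to finite order at $p_{\infty}$. By the first part of that proposition $\cal X$ extends holomorphically across $p_{\infty}$, and since the translations $T_t$ fix the point at infinity of $\pa G$, the extended flow $(L_t)$ fixes $p_{\infty}$; thus $\cal X(p_{\infty}) = 0$. I would then record that $p_{\infty}$ is an \emph{isolated} zero of $\cal X$ on $\pa D$: this is where the finite order of vanishing is used, for it prevents the zero set from containing a complex curve through $p_{\infty}$, which in turn is ruled out along $\pa D$ by the finite type hypothesis, exactly as in \cite{BP4}. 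The blow-up $F_D(p, v)/\vert v \vert \to \infty$ of Lemma 2.1 is the analytic input that controls the transverse behaviour of the flow near $p_{\infty}$.

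With $p_{\infty}$ an isolated finite order zero of the extended field on the finite type real analytic hypersurface $U \cap \pa D$, I would run the normal form analysis of \cite{BP4} in the coordinates induced on $U$ by $\psi$. Linearising $\cal X$ at $p_{\infty}$ and classifying the local models by the linear part, the finite type and finite order conditions constrain the admissible exponents while the metric blow-up of Lemma 2.1 eliminates the non-parabolic alternatives (in which orbits would either approach $p_{\infty}$ tangentially along $\pa D$ or exhibit hyperbolic transverse behaviour). Only the parabolic normal form survives, and it forces $D \backsimeq \big\{ (z_1, z_2) \in \mbb C^2 : 2 \Re z_2 + \vert z_1 \vert^{2m} < 0 \big\}$, an ellipsoid with four dimensional automorphism group. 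As $4 \ne 1$, this contradicts the standing assumption, so $\dim {\rm Aut}(D) \ge 2$.

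The main obstacle is the transfer of the vector field analysis of \cite{BP4}, originally carried out for bounded domains in $\mbb C^2$, to the present purely local setting near $p_{\infty}$ inside the chart $\psi(U)$. The two ingredients that make this possible are the holomorphic extension of $\cal X$ past $p_{\infty}$ given by Proposition 2.2 and the localisation and blow-up of the Kobayashi metric given by Lemma 2.1; the delicate verification is that these local substitutes for global boundedness are strong enough to reproduce the eigenvalue dichotomy of \cite{BP4} and, in particular, to isolate the parabolic case responsible for the ellipsoid.
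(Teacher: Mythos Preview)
Your overall contradiction strategy matches the paper's: assume $\dim{\rm Aut}(D)=1$, use the invariance of $|E|$ under ${\rm Aut}(G)$ to get $|\ep_j|$ bounded below (your computation $|\ep_j|=1$ from the base point $(0,-1)$ is the paper's $\eta$ made explicit), invoke Proposition~2.2 for $\cal X(p_\infty)=0$ of finite order, use Lemma~3.5 of \cite{BP4} for isolation, and finish via \cite{BP4} with the ellipsoid.

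The substantive difference is how parabolicity of $(L_t)$ is obtained. You propose to extract it from the eigenvalue/normal-form dichotomy of \cite{BP4}, using the metric blow-up of Lemma~2.1 to rule out the non-parabolic cases. The paper instead proves parabolicity directly, \emph{before} appealing to \cite{BP4}, by a short cluster-set argument. Since the orbit $(a_j,b_j)$ and the base point lie in the sublevel set $G_{\eta/2}=\{2\Re z_2+P<-\eta/2\}$, one examines the cluster set of the point at infinity of $\pa G_{\eta/2}$ under $g$. Any point of this cluster set lying on $\pa D$ near $p_\infty$ is again a zero of $\cal X$ (the argument of Proposition~2.2 applies to any such sequence), so by connectedness of the cluster set and isolation of the zero it reduces to $\{p_\infty\}$. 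Hence some $G_M\subset G_{\eta/2}$ maps into $U\cap D$, and for $s=g(\tilde s_1,\tilde s_2)\in g(G_M)$ one has explicitly $L_t(s)=g(\tilde s_1,\tilde s_2+it)\to p_\infty$ as $|t|\to\infty$. The Kobayashi-ball clause of Lemma~2.1 then propagates this to every compact $K\subset D$, giving parabolicity. Only then is \cite{BP4} invoked, and only for the implication ``parabolic $\Rightarrow$ ellipsoid''.

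Your route is not wrong in spirit, but the sentence ``the metric blow-up of Lemma~2.1 eliminates the non-parabolic alternatives'' is where the real work hides, and it is left as a black box; it is not clear the \cite{BP4} normal-form analysis can be run purely locally without first knowing that some $L_t$-orbit actually accumulates at $p_\infty$ --- which is precisely what the paper's cluster-set argument supplies cheaply. Relatedly, your heuristic ``$T_t$ fixes infinity, so $L_t$ fixes $p_\infty$'' presupposes that $g$ carries infinity to $p_\infty$; this is not known a priori and is again exactly what the cluster-set step establishes (the conclusion $\cal X(p_\infty)=0$ is of course still correct, being part of ``vanishes to finite order'' in Proposition~2.2).
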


\begin{proof}
Suppose that $\dim {\rm Aut}(D) = \dim {\rm Aut}(G) = 1$. Write
\[
(a_j, b_j) = g^{-1} \circ \phi_j(p) = g^{-1} \circ \phi_j \circ g(g(p))
\]
and note that $g^{-1} \circ \phi_j \circ g \in {\rm Aut}(G)$ for all $j \ge 1$. Let $g^{-1}(p) = q = (q_1, q_2) \in G$. By the arguments summarized above, it follows that
\begin{equation}
\vert 2 \Re b_j + P(a_j, \ov a_j) \vert = \vert 2 \Re q_2 + P(q_1, \ov q_1) \vert > 0
\end{equation}
for all $j \ge 1$. This shows that the orbit $\{g^{-1} \circ \phi_j(p)\} \in G$ can only cluster at the point at infinity in $\pa G$. Let
\[
\eta =  \vert 2 \Re q_2 + P(q_1, \ov q_1) \vert > 0
\]
and for $r > 0$ define
\[
G_r = \big\{ (z_1, z_2) \in \mbb C^2 : 2 \Re z_2 + P(z_1, \ov z_1) < -r \big\} \subset G.
\]
Observe that the boundaries of $G$ and $G_r$ intersect only at the point at infinity for all $r > 0$. Furthermore, the entire orbit $(a_j, b_j)$ and $q$ are contained in $G_{\eta/2}$ 
by (2.6). By Proposition 2.2 above it follows that $\cal X(p_{\infty}) = 0$ and by Lemma 3.5 of \cite{BP4} the intersection of the zero set of $\cal X$ with $\pa D$ contains 
$p_{\infty}$ as an isolated point. Now regard $g$ as a holomorphic mapping from $G_{\eta/2}$ into $D$. The sequence $(a_j, b_j) \in G_{\eta/2}$ converges to the point at infinity in 
$\pa G_{\eta/2}$ and its image under $g$, namely $\phi_j(p)$, converges to $p_{\infty}$. Proposition 2.2 also shows that if the cluster set of the point at infinity in $\pa G_{\eta/2}$ 
intersects $\pa D$ near $p_{\infty}$, then the vector field $\cal X$ vanishes at all such points. Since the cluster set of the point at infinity in $\pa G_{\eta/2}$ under $g$ is 
connected and contains $p_{\infty}$ as an isolated point, it must equal $p_{\infty}$.

\medskip

Thus for a given small neighbourhood $U$ of $p_{\infty}$ there exists a neighbourhood of the point at infinity in $\pa G_{\eta/2}$ which is mapped by $g$ into $U \cap D$. However, 
a neighbourhood of infinity in $G_{\eta/2}$ contains $G_M$ for some large $M > 0$. Fix a point $s \in g(G_{M}) \subset D$ and let $\tilde s = (\tilde s_1, \tilde 
s_2) \in G_{M}$ be such that $s = g(\tilde s)$. Note that
\[
L_t(s) = L_t \circ g(\tilde s) = g \circ T_t(\tilde s_1, \tilde s_2) = g(\tilde s_1, \tilde s_2 + it)
\]
which gives $L_t(s) \ra p_{\infty}$ as $\vert t \vert \ra \infty$. For any compact $K \subset D$ there exists $R > 0$ such that $K$ is contained in the Kobayashi ball $B^k_D(s, R)$. 
Hence $L_t(K) \subset B^k_D(L_t(s), R)$ for any $t \in \mbb R$. By Lemma 2.1 it follows that $L_t$ moves any point in $D$ in both forward and backward time to $p_{\infty}$, i.e., the 
action of $L_t$ on $D$ is parabolic. The arguments of \cite{BP4} can now be applied to show that 
\[
D \backsimeq \big\{ (z_1, z_2) \in \mbb C^2 : \vert z_1 \vert^2 + \vert z_2 \vert^{2m} < 1 \big\}
\]
for some integer $m \ge 1$. Thus $\dim {\rm Aut}(D) = 4$ which is a contradiction.
\end{proof}

\no In case $\dim {\rm Aut}(D) = 2$, note that the calculations done in section 3 of \cite{V} deal with only the defining function of $G$ and hence they apply in this situation as well. 
Indeed, the following dichotomy holds -- here ${\rm Aut}(D)^c$ is the connected component of the identity.

\begin{enumerate}

\item[(i)] If ${\rm Aut}(D)^c$ is abelian, then $D$ is biholomorphic to either
\[
\cal D_1 = \big\{ (z_1, z_2) \in \mbb C^2 : 2 \Re z_2 + P_1(\Re z_1) < 0 \big\}
\]
or 
\[
\cal D_2 = \big\{ (z_1, z_2) \in \mbb C^2 : 2 \Re z_2 + P_2(\vert z_1 \vert^2) < 0 \big\}
\]
for some polynomials $P_1, P_2$ that depend only on $\Re z_1$ or $\vert z_1 \vert^2$ respectively.

\item[(ii)] If ${\rm Aut}(D)^c$ is non-abelian then $D$ is biholomorphic to
\[
\cal D_3 = \big\{ (z_1, z_2) \in \mbb C^2 : 2 \Re z_2 + P_{2m}(z_1, \ov z_1) < 0 \big\}
\]
where $P_{2m}(z_1, \ov z_1)$ is a homogeneous polynomial of degree $2m$ without harmonic terms.
\end{enumerate}


\section{Model domains when ${\rm Aut}(D)$ is three dimensional}

\subsection{A tube domain and its finite and infinite sheeted covers} For $0 \le s < t < \infty$ define
\[
\mathfrak S_{s, t} = \big\{ (z_1, z_2) \in \mbb C^2 : s < (\Re z_1)^2 + (\Re z_2)^2 < t \big\}
\]
which is a non-simply connected tube domain over a nonconvex base. Evidently $D$ cannot be biholomorphic to $\mathfrak S_{s, t}$ since $D$ is simply connected as observed earlier. It is 
possible to consider finite and infinite sheeted covers of $\mathfrak S_{s, t}$. To obtain a finite sheeted cover, consider the $n$-sheeted covering self map 
\[
\Phi_{\cal \chi}^{(n)} : \mbb C^2 \sm \big\{ \Re z_1 = \Re z_2 = 0 \big\} \ra  \mbb C^2 \sm \big\{ \Re z_1 = \Re z_2 = 0 \big\} 
\]
whose components are given by

\begin{align*}
\ti z_1 &= \Re \big( (\Re z_1 + i \Re z_2)^n \big) + i \Im z_1,\\
\ti z_2 &= \Im \big( (\Re z_1 + i \Re z_2)^n \big) + i \Im z_2.   
\end{align*}

\no Equip $\mbb C^2 \sm \{\Re z_1 = \Re z_2 = 0\}$ with the pull-back complex structure using $\Phi_{\cal \chi}^{(n)}$ and call the resulting complex surface $M_{\cal \chi}^{(n)}$. For 
$0 \le s < t < \infty$ and $n \ge 2$ define
\[
\mathfrak S^{(n)}_{s, t} = \big\{ (z_1, z_2) \in M_{\cal \chi}^{(n)} : s^{1/n} < (\Re z_1)^2 + (\Re z_2)^2 < t^{1/n} \big\}.
\]
Then $\Phi_{\cal \chi}^{(n)}$ is an $n$-sheeted holomorphic covering map from $\mathfrak S^{(n)}_{s, t}$ onto $\mathfrak S_{s, t}$. It is clear that the domains $\mathfrak S^{(n)}_{s, 
t}$ are not simply connected and hence $D$ cannot be equivalent to any of them. Proposition 4.7 in \cite{V} provides a different proof of this fact which uses ideas that are applicable 
for other classes of domains as well. This can be adapted in the setting of theorem 1.1 as follows:

\begin{prop}
There cannot exist a proper holomorphic mapping from $D$ onto $\mathfrak S_{s, t}$ for all $0 \le s < t < \infty$. In particular, $D$ cannot be equivalent to $\mathfrak S^{(n)}_{s, t}$ 
for any $n \ge 2$ and $0 \le s < t < \infty$.
\end{prop}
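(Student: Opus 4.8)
The plan is to argue by contradiction via the fundamental group, exploiting that $D$ is simply connected (as established in the introduction) while $\mathfrak S_{s, t}$ is not. Suppose $F : D \ra \mathfrak S_{s, t}$ were a proper holomorphic surjection. Since $D$ and $\mathfrak S_{s, t}$ both have complex dimension two and $F$ is onto, $F$ is generically finite, and by properness together with the Remmert proper mapping theorem it is a finite branched covering of some degree $k \ge 1$. Its critical set is the hypersurface $\{ \det dF = 0 \} \subset D$, and the branch locus $V = F(\{\det dF = 0\}) \subset \mathfrak S_{s, t}$ is a complex analytic hypersurface. Writing $V' = F^{-1}(V)$, the restriction $F : D \sm V' \ra \mathfrak S_{s, t} \sm V$ is a proper local biholomorphism, hence an unbranched $k$-sheeted covering map. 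This is the first step I would record carefully.

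Next I would isolate the topological input. Since $\mathfrak S_{s, t}$ is a tube domain over the planar region $\{ s < x^2 + y^2 < t \}$ in the $(\Re z_1, \Re z_2)$-variables, which is homotopy equivalent to a circle (an annulus when $s > 0$, a punctured disc when $s = 0$), one has $\pi_1(\mathfrak S_{s, t}) \cong \mbb Z$. Because $V$ has real codimension two, the inclusion $\mathfrak S_{s, t} \sm V \hookrightarrow \mathfrak S_{s, t}$ induces a surjection $\phi : \pi_1(\mathfrak S_{s, t} \sm V) \twoheadrightarrow \mbb Z$ (a generic loop can be pushed off $V$). The covering $F : D \sm V' \ra \mathfrak S_{s, t} \sm V$ gives a homomorphism $F_\ast : \pi_1(D \sm V') \ra \pi_1(\mathfrak S_{s, t} \sm V)$ whose image $H$ has index $k$. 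As $\phi$ is onto $\mbb Z$, the coset map $\pi_1(\mathfrak S_{s, t} \sm V)/H \ra \mbb Z/\phi(H)$ is a surjection of finite total size at most $k$, so $\phi(H) \le \mbb Z$ has finite index and is therefore a nonzero subgroup.

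The contradiction comes from computing $\phi \circ F_\ast$ directly. Since $D$ is simply connected and $V'$ is a hypersurface, $\pi_1(D \sm V')$ is normally generated by the meridian loops about the smooth points of the irreducible components of $V'$. For such a meridian $\mu$, which bounds a small transverse disc $\Delta_\mu \subset D$ meeting $V'$ in a single point, the continuous image $F(\Delta_\mu) \subset \mathfrak S_{s, t}$ is a disc bounded by $F(\mu)$; hence $F(\mu)$ is null-homotopic in $\mathfrak S_{s, t}$ and $\phi(F_\ast(\mu)) = 0$. As $\mbb Z$ is abelian, $\phi \circ F_\ast$ kills the normal closure of the meridians and therefore vanishes identically, forcing $\phi(H) = 0$. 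This contradicts the previous paragraph and proves the first assertion. For the ``in particular'' statement, a biholomorphism $D \backsimeq \mathfrak S^{(n)}_{s, t}$ composed with the $n$-sheeted holomorphic covering $\Phi_{\cal \chi}^{(n)} : \mathfrak S^{(n)}_{s, t} \ra \mathfrak S_{s, t}$ (which is in particular proper and onto) would yield a proper holomorphic surjection $D \ra \mathfrak S_{s, t}$, contradicting what was just shown.

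I expect the only delicate point to be the purely topological claim that, with $D$ simply connected, $\pi_1(D \sm V')$ is normally generated by meridians about the components of $V'$, and that these meridians are sent trivially into $\mbb Z = \pi_1(\mathfrak S_{s, t})$. Making this precise requires restricting attention to generic smooth points of $V$ and $V'$ and observing that the singular loci, being of higher real codimension, affect neither the generation statement nor the surjectivity of $\phi$. Once these standard facts about complements of analytic hypersurfaces are in hand, the remainder of the argument is formal.
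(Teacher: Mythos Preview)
Your argument is correct and takes a genuinely different route from the paper's. You exploit only the topological data: $D$ is simply connected, $\pi_1(\mathfrak S_{s,t}) \cong \mbb Z$, and a proper holomorphic surjection is a finite branched covering whose restriction off the branch locus has image of finite index in $\pi_1$; since $\pi_1(D \sm V')$ is normally generated by meridians, which die in $\pi_1(\mathfrak S_{s,t})$, the image is trivial in $\mbb Z$, a contradiction. The paper instead argues analytically: it uses the stratification of $\pa D$ near $p_\infty$ to find points where the Levi form has a prescribed sign, observes that a proper map must respect pseudoconvexity/pseudoconcavity at boundary points where it extends, forces $\pa D$ to be weakly pseudoconvex near $p_\infty$, invokes \cite{SV} to identify $D$ with a model $\ti D$, and then derives a contradiction (via non-pseudoconvexity of $\mathfrak S_{s,t}$ when $s > 0$, and via Webster's algebraicity theorem and the totally real stratum $i\mbb R^2$ when $s = 0$). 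Your proof is more elementary and self-contained for this particular target; the paper's approach, as it says explicitly, is presented because the same circle of ideas (boundary extension, weak sphericity, peak functions, algebraicity) is what handles the later targets $E_{s,t}$, $E_t$, $\Om_t$, $D_s$, several of which are simply connected and therefore immune to your $\pi_1$ argument.
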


\begin{proof}
Let $\pi : D \ra \mathfrak S_{s, t}$ be a proper holomorphic mapping. The case when $0 < s < t < \infty$ will be considered first. The boundary of $\mathfrak S_{s, t}$ has two 
components, namely
\begin{align*}
\pa \mathfrak S_{s, t}^+ &= \big\{ (z_1, z_2) \in \mbb C^2 : (\Re z_1)^2 + (\Re z_2)^2 = t \big\}, \; {\rm and} \\
\pa \mathfrak S_{s, t}^- &= \big\{ (z_1, z_2) \in \mbb C^2 : (\Re z_1)^2 + (\Re z_2)^2 = s \big\}.
\end{align*}
The orientation induced on these pieces by $\mathfrak S_{s, t}$ makes them strongly pseudoconvex and strongly pseudoconcave respectively. Lemma 2.1 of \cite{V} shows that there is a two 
dimensional stratum $S \subset \cal L \cap \hat D$ that clusters at $p_{\infty}$ -- this is a purely local assertion and hence it remains valid here as well. Pick $a \in S$ near 
$p_{\infty}$ and let $W$ be a small neighbourhood of $a$ so that $\pi$ extends holomorphically to $W$. Note that $(W \cap \pa D) \sm S$ consists of points that are either strongly 
pseudoconvex or strongly pseudoconcave. Let $V_{\pi} \subset W$ be the branching locus of $\pi : W \ra \mbb C^2$. Since $\pa D$ is finite type, it follows that $V_{\pi} \cap \pa D$ has 
real dimension at most one. There are two possibilities now -- first, if $\pi(a) \in \mathfrak S_{s, t}^+$, then choose a strongly pseudoconcave point $a' \in (W \cap \pa D) \sm 
V_{\pi}$. Thus $\pi$ maps a neighbourhood of $a'$, which is strongly pseudoconcave, locally biholomorphically onto a neighbourhood of  $\pi(a') \in \pa \mathfrak S_{s, t}^+$ and this is a 
contradiction. A similar argument can be given when $\pi(a') \in \mathfrak S_{s, t}^-$. The only possibility then is that there are no pseudoconcave points near $p_{\infty}$, i.e., $\pa 
D$ is weakly pseudoconvex near $p_{\infty}$. In this case, \cite{SV} shows that 
\begin{equation}
D \backsimeq \ti D = \big\{ (z_1, z_2) \in \mbb C^2 : 2 \Re z_2 + P_{2m}(z_1, \ov z_1) < 0 \big\}
\end{equation}
where $P_{2m}(z_1, \ov z_1)$ is a homogeneous subharmonic polynomial of degree $2m$ -- this being the $1$-type of $\pa D$ at $p_{\infty}$, without harmonic terms. In particular $D$ is 
globally pseudoconvex and as $\pi$ is proper, it follows that $\mathfrak S_{s, t}$ is also pseudoconvex. However, this is not the case.

\medskip

When $0 = s < t < \infty$, the two components of $\pa \mathfrak S_{0, t}$ are 
\begin{align*}
\pa \mathfrak S_{0, t}^+ &= \big\{ (z_1, z_2) \in \mbb C^2 : (\Re z_1)^2 + (\Re z_2)^2 = t \big\}, \; {\rm and} \\
i \mbb R^2               &= \big\{ \Re z_1 = \Re z_2 = 0 \big\}.
\end{align*}
Choose $a \in S$ as above and let $W, W$ be small neighbourhoods of $a$ and $\pi(a)$ so that $\pi : W \ra W'$ is a well defined holomorphic mapping. Suppose that $\pi(a) \in i \mbb 
R^2$. Since $V_{\pi} \cap \pa D$ has real dimension at most one, it follows that there is an open piece of $W \cap \pa D$ near $a$ that is mapped locally 
biholomorphically onto an open piece in $i \mbb R^2$ and this is a contradiction. A similar argument shows that $\pi(a) \notin \pa \mathfrak S_{0, t}^+$ and therefore the only 
possibility is that $\pa D$ is weakly pseudoconvex near $p_{\infty}$. By \cite{SV} it follows that $D \backsimeq \ti D$ where $\ti D$ is as in (3.1). Let $\pi$ still denote the proper 
mapping
\[
\pi : \ti D \ra \mathfrak S_{0, t}.
\]
Let $\phi$ be a holomorphic function on $\ti D$ that peaks at the point at infinity in $\pa \ti D$. Then $\psi = \log \vert \phi - 1 \vert$ is a plurisubharmonic function that is 
bounded above on $\ti D$ and has the property that $\psi \ra -\infty$ at the point at infinity in $\pa \ti D$. If $\pi^{-1}_1, \pi^{-1}_2, \ldots, \pi^{-1}_m$ are the local branches of 
$\pi^{-1}$, then it is known that
\[
\ti \psi = \max \{ \psi \circ \pi^{-1}_j : 1 \le j \le m \}
\]
extends to a plurisubharmonic function on $\mathfrak S_{0, t}$. If there is an open piece of $\pa \mathfrak S_{0, t}^+$ on which $\ti \psi \ra \infty$, then the uniqueness theorem shows 
that $\ti \psi \equiv -\infty$ and this is a contradiction. Thus there is a point, say $p \in \pa \ti D$ whose cluster set under $\pi$ intersects $\pa \mathfrak S_{0, t}^+$. Then $\pi$ 
extends continuously up to $\pa \ti D$ near $p$ and this extension is even locally biholomorphic across strongly pseudoconvex points which are known to be dense on $\pa \ti D$. By 
Webster's theorem, $\pi$ is algebraic. Away from a codimension one algebraic variety $Z$, the inverse $\pi^{-1}$ defines a correspondence that is locally given by finitely many 
holomorphic maps. Since $Z \cap i \mbb R^2$ has real dimension at most one, it is possible to pick $p' \in i \mbb R^2 \sm Z$. The branches of $\pi^{-1}$ will now map an open piece of $i 
\mbb R^2$ near $p'$ locally biholomorphically (shift $p'$ if necessary to achieve this) to an open piece on $\pa \ti D$. This cannot happen as $\pa \ti D$ is not totally real.

\medskip

To conclude, let $f : D \ra \mathfrak S_{s, t}^{(n)}$ be biholomorphic. Since $\mathfrak S_{s, t}^{(n)}$ inherits the complex structure from $\mathfrak S_{s, t}$ via $\Phi_{\cal 
\chi}^{(n)}$, it follows that 
\[
\pi = \Phi_{\cal \chi}^{(n)} \circ f : D \ra \mathfrak S_{s, t}
\]
is an unbranched, proper holomorphic mapping between domains with the standard complex structure. Such a map cannot exist as shown above.
\end{proof}

To construct an infinite sheeted cover of $\mathfrak S_{s, t}$, consider the infinite sheeted covering map
\[
\Phi^{(\infty)}_{\cal \chi} : \mbb C^2 \ra \mbb C^2 \sm \big\{ \Re z_1 = \Re z_2 = 0 \big\}
\]
whose components are given by
\begin{align*}
\ti z_1    &= \exp(\Re z_1) \cos(\Im z_1) + i \Re z_2, \; {\rm and}\\
\ti z_2    &= \exp(\Re z_1) \sin(\Im z_1) + i \Im z_2.
\end{align*}
Equip $\mbb C^2$ with the pull-back complex structure using $\Phi^{(\infty)}_{\cal \chi}$ and denote the resulting complex manifold by $M^{(\infty)}_{\cal \chi}$. For $0 \le s < t < 
\infty$ define
\[
\mathfrak S_{s, t}^{(\infty)} = \big\{ (z_1, z_2) \in M^{(\infty)}_{\cal \chi} : (\ln s)/2 < \Re z_1 < (\ln t)/2 \big\}.
\]
This is seen to be an infinite sheeted covering of $\mathfrak S_{s, t}$, the holomorphic covering map being $\Phi^{(\infty)}_{\cal \chi}$.

\begin{prop}
$D$ is not biholomorphic to $\mathfrak S_{s, t}^{(\infty)}$ for $0 \le s < t < \infty$.
\end{prop}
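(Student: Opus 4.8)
The plan is to reduce to the boundary analysis of Proposition 3.2, the one genuinely new ingredient being a substitute for properness. Suppose, for a contradiction, that there is a biholomorphism $f : D \ra \mathfrak S^{(\infty)}_{s, t}$, and set $\pi = \Phi^{(\infty)}_{\cal \chi} \circ f : D \ra \mathfrak S_{s, t}$. Since $\Phi^{(\infty)}_{\cal \chi}$ is a local biholomorphism and $f$ is biholomorphic, $\pi$ is an unbranched holomorphic covering of $\mathfrak S_{s, t}$ by $D$; in particular $\pi$ is a local biholomorphism on $D$ with $\pi(D) = \mathfrak S_{s, t}$. Because the covering is infinite sheeted, $\pi$ is \emph{not} proper, so Proposition 3.2 cannot be invoked verbatim. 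I would nonetheless recover its crucial input, namely that $\pi$ carries $\pa D$ locally into $\pa \mathfrak S_{s, t}$, and then run the Levi-sign contradiction as before.

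First I would extend $\pi$ across the boundary near $p_\infty$. By Lemma 2.1 of \cite{V} -- a purely local assertion, valid here as well -- there is a two dimensional stratum $S \subset \cal L \cap \hat D$ clustering at $p_\infty$. As the components of $\pi$ are holomorphic functions on $U \cap D$ and $S \subset \hat D \cap \pa D$ lies in the envelope of holomorphy, $\pi$ extends holomorphically to a neighbourhood $W$ of a point $a \in S$. Since the branching locus $V_\pi$ of the extended map meets the finite type hypersurface $\pa D$ in a set of real dimension at most one while $\dim_{\mbb R} S = 2$, I may choose $a \in S \sm V_\pi$, so that, after shrinking $W$, $\pi$ is a local biholomorphism on $W$ taking the inner side $W \cap D$ into $\pi(D) = \mathfrak S_{s, t}$.

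The heart of the matter, and the step I expect to be the main obstacle, is to show that $\pi(W \cap \pa D) \subset \pa \mathfrak S_{s, t}$ without properness. Fix $a' \in (W \cap \pa D) \sm V_\pi$ and examine $\pi(a')$. It cannot lie in $\mbb C^2 \sm \ov{\mathfrak S_{s, t}}$, for then a full neighbourhood of $a'$ would map outside $\ov{\mathfrak S_{s, t}}$, contradicting $\pi(W \cap D) \subset \mathfrak S_{s, t}$. Nor can it lie in the interior $\mathfrak S_{s, t}$: if it did, pick a small connected, evenly covered neighbourhood $N \subset \pi(W)$ of $\pi(a')$ and let $\sigma : N \ra W$ be the local inverse of $\pi|_W$, a holomorphic section of $\pi$ with $\sigma(\pi(a')) = a'$. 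Near any point where $\sigma$ takes values in $D$ -- such points exist since $a' \in \pa D$ forces $W \cap D \neq \emptyset$ -- the section $\sigma$ coincides with $(\pi|_{N_\alpha})^{-1}$ for one sheet $N_\alpha \subset D$ of the covering, so by the identity theorem $\sigma \equiv (\pi|_{N_\alpha})^{-1}$ on all of $N$; hence $a' = \sigma(\pi(a')) \in N_\alpha \subset D$, contradicting $a' \in \pa D$. Therefore $\pi(a') \in \pa \mathfrak S_{s, t}$, and as such $a'$ are dense in $W \cap \pa D$, continuity yields $\pi(W \cap \pa D) \subset \pa \mathfrak S_{s, t}$.

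With this in hand the proof concludes exactly as in Proposition 3.2. Since $\pi$ is a local biholomorphism near $a$ taking $W \cap D$ into $\mathfrak S_{s, t}$, it preserves the sign of the Levi form. When $0 < s < t$ the components $\pa \mathfrak S^+_{s, t}$ and $\pa \mathfrak S^-_{s, t}$ are disjoint and strongly pseudoconvex, respectively strongly pseudoconcave; when $s = 0$ the second component is $i \mbb R^2$, which is totally real of real dimension two and so cannot contain the image of the three real dimensional hypersurface $W \cap \pa D$, leaving only the strongly pseudoconvex component. In either case the connected set $\pi(W \cap \pa D)$ lies in a single component, on which the Levi form has constant sign, while $a$ sits in the closure of $\pa \mathfrak S^{\pm}_{s, t}$ of just one type -- say the strongly pseudoconvex one. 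But the Levi form of $\pa D$ changes sign in every neighbourhood of $p_\infty$, so I may choose a strongly pseudoconcave $a' \in (W \cap \pa D) \sm V_\pi$ arbitrarily close to $a$; its image $\pi(a')$ then lies near $\pi(a)$, hence on the strongly pseudoconvex component, yet is strongly pseudoconcave -- a contradiction (the case $\pi(a) \in \pa \mathfrak S^-_{s, t}$ being symmetric). Hence no such $f$ exists, and once the local inverse of $\pi$ is matched to a genuine sheet of the covering via the identity theorem, the remaining pseudoconvexity bookkeeping is precisely that of Proposition 3.2.
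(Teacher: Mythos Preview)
Your approach differs from the paper's in two respects, and both are worth noting. First, to show that $\pi$ carries $W\cap\pa D$ into $\pa\mathfrak S_{s,t}$, the paper simply reads off from the explicit formula for $\Phi^{(\infty)}_{\cal\chi}$ that it maps $\pa\mathfrak S^{(\infty)}_{s,t}$ into $\pa\mathfrak S_{s,t}$; combined with $f(\pa D)\subset\pa\mathfrak S^{(\infty)}_{s,t}$ in the cluster-set sense this gives the conclusion at once. Your covering-theoretic argument via the local inverse $\sigma$ and the identity theorem is correct and more conceptual, though less direct. Second, after the boundary analysis the paper does \emph{not} stop at a Levi-sign contradiction: it concludes only that $\pa D$ is weakly pseudoconvex near $p_\infty$, invokes \cite{SV} to get $D\backsimeq\ti D$, and then --- separately for $0<s$ (via completeness of the Kobayashi metric forcing $\mathfrak S_{s,t}$ to be pseudoconvex) and for $s=0$ (via \cite{CP} and Webster's theorem, giving algebraicity of $\pi$ and hence finite fibres) --- derives the contradiction. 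Your route is considerably shorter because you exploit the standing assumption, already recorded in the introduction, that the Levi form of $\pa D$ does change sign near $p_\infty$; the paper's longer argument is in effect re-deriving the pseudoconvex case and then ruling it out by other means.

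There is, however, a genuine gap in your ``symmetric'' case. You choose $a\in S\subset\hat D\cap\cal L$, and since $\hat D\cap\pa D$ is open in $\pa D$, for $W$ small enough $W\cap\pa D$ lies entirely in $\hat D$. Levi non-degenerate points of $\hat D$ are strongly pseudoconcave (a strongly pseudoconvex boundary point cannot lie in the envelope of holomorphy of the inner side), so $W\cap\pa D$ contains \emph{no} strongly pseudoconvex points. Thus when $\pi(W\cap\pa D)\subset\pa\mathfrak S^-_{s,t}$ (which occurs only for $s>0$) your appeal to a strongly pseudoconvex $a'\in W\cap\pa D$ is vacuous: the Levi form changes sign near $p_\infty$, but not inside your small $W$. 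The fix is immediate and in fact simpler than what you wrote: the point $a$ itself is Levi flat and $\pi$ is a local biholomorphism near $a$, so $\pi(a)$ is a Levi flat point of $\pa\mathfrak S_{s,t}$ --- but neither $\pa\mathfrak S^+_{s,t}$ nor $\pa\mathfrak S^-_{s,t}$ has any, which already contradicts $\pi(a)\in\pa\mathfrak S^\pm_{s,t}$. This also streamlines the $s=0$ case once you have $\pi(W\cap\pa D)\subset\pa\mathfrak S^+_{0,t}$.
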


\begin{proof}
Let $f : D \ra \mathfrak S_{s, t}^{(\infty)}$ be a biholomorphism. Then
\[
\pi = \Phi^{(\infty)}_{\cal \chi} \circ f : D \ra \mathfrak S_{s, t}
\]
is a holomorphic infinite sheeted covering map between domains equipped with the standard complex structure. Using the explicit description of $\Phi^{(\infty)}_{\cal \chi}$, we see that
it maps the boundary of $\mathfrak S_{s, t}^{(\infty)}$ into the boundary of $\mathfrak S_{s, t}$. Hence the cluster set of $\pa D$ under $\pi$ is contained in $\pa \mathfrak S_{s, t}$. 
Now if $0 < s < t < \infty$, then by choosing an appropriate point on the two dimensional stratum $S \subset \cal L$ as in the previous proposition, it follows that $\pa D$ must be 
weakly pseudoconvex near $p_{\infty}$. By \cite{SV}, $D \backsimeq \ti D$ where $\ti D$ is as in (3.1). Hence $\ti D$ covers $\mathfrak S_{s, t}$ and since the Kobayashi metric on $\ti 
D$ is complete, it follows that the same must hold for $\mathfrak S_{s, t}$. Completeness then forces $\mathfrak S_{s, t}$ to be pseudoconvex which it is not. Contradiction.

\medskip

If $0 = s < t < \infty$, then first note that the conclusion that $\ti D$ covers $\mathfrak S_{0, t}$ still holds and let $\pi$ still denote this infinite sheeted covering map. By 
\cite{CP}, there exists a point on $\pa \ti D$ whose cluster set under $\pi$ intersects $\pa \mathfrak S_{0, t}^+$. By standard arguments involving the Kobayashi metric, $\pi$ extends 
continuously up to $\pa \ti D$ near this point. This extension is even locally biholomorphic near strongly pseudoconvex points that are known to be dense in $\pa \ti D$. By Webster's 
theorem, $\pi$ is algebraic and therefore the cardinality of a generic fibre of $\pi$ is finite. This contradicts the fact that $\pi$ is an infinite sheeted cover.
\end{proof}


\subsection{A domain in $\mbb P^2$} Let $\cal Q_+ \subset \mbb C^3$ be the smooth complex analytic set given by
\[
z_0^2 + z_1^2 + z_2^2 = 1.
\]
For $1 \le s < t < \infty$ define
\[
E^{(2)}_{s, t} = \big\{ (z_0, z_1, z_2) \in \mbb C^3 : s < \vert z_0 \vert^2 + \vert z_1 \vert^2 + \vert z_2 \vert^2 < t \big\} \cap \cal Q_+.
\]
This is a two sheeted covering of
\[
E_{s, t} = \big\{ [z_0 : z_1 : z_2] \in \mbb P^2 : s \vert z_0^2 + z_1^2 + z_2^2 \vert < \vert z_0 \vert^2 + \vert z_1 \vert^2 + \vert z_2 \vert^2 < t \vert z_0^2 + z_1^2 + z_2^2 \vert 
\big\},
\]
the covering map being $\psi(z_0, z_1, z_2) = [z_0 : z_1 : z_2]$. Similarly, for $1 < t < \infty$, the map
\[
\psi : E_t^{(2)} \ra E_t
\]
is a two sheeted covering, where
\[
E_t^{(2)} = \big\{ (z_0, z_1, z_2) \in \mbb C^3 : \vert z_0 \vert^2 + \vert z_1 \vert^2 + \vert z_2 \vert^2 < t \big\} \cap \cal Q_+
\]
and
\[
E_t = \big\{ [z_0 : z_1 : z_2] \in \mbb P^2 : \vert z_0 \vert^2 + \vert z_1 \vert^2 + \vert z_2 \vert^2 < t \vert z_0^2 + z_1^2 + z_2^2 \vert \big\}.
\]
To construct a four sheeted cover of $E_{s, t}$, consider the map $\Phi_{\mu} : \mbb C^2 \sm \{0\} \ra \cal Q_+$ whose components are given by
\begin{align*}
\ti z_1 &= -i(z_1^2 + z_2^2) + i(z_1 \ov z_2 - \ov z_1 z_2)/(\vert z_1 \vert^2 + \vert z_2 \vert^2),\\
\ti z_2 &= z_1^2 - z_2^2 - (z_1 \ov z_2 + \ov z_1 z_2)/(\vert z_1 \vert^2 + \vert z_2 \vert^2),\; {\rm and}\\
\ti z_3 &= 2 z_1 z_2 + (\vert z_1 \vert^2 - \vert z_2 \vert^2)/(\vert z_1 \vert^2 + \vert z_2 \vert^2).
\end{align*}
Note that $\Phi_{\mu}$ is a two sheeted cover onto $\cal Q_+ \sm \mbb R^3$. Therefore we may equip the domain of $\Phi_{\mu}$, i.e., $\mbb C^2 \sm \{0\}$ with the pull back complex 
structure using $\Phi_{\mu}$ and denote the resulting complex surface by $M^{(4)}_{\mu}$. For $1 \le s < t < \infty$, the domain
\[
E^{(4)}_{s, t} = \big\{ (z_1, z_2) \in M^{(4)}_{\mu} : ((s-1)/2)^{1/2} < \vert z_1 \vert^2 + \vert z_2 \vert^2 < ((t-1)/2)^{1/2} \big\}
\]
is a four sheeted cover of $E_{s, t}$, the holomorphic covering map being $\psi \circ \Phi_{\mu}$.

\begin{prop}
There cannot exist a proper holomorphic mapping from $D$ onto $E_{s, t}$ for all $1 \le s < t < \infty$. In particular, $D$ is not equivalent to either $E^{(2)}_{s, t}$ or $E^{(4)}_{s, 
t}$.
\end{prop}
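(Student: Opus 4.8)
The plan is to follow the proof of Proposition 3.2, the two components of $\pa E_{s,t}$ playing the role of the two boundary pieces of $\mathfrak S_{s,t}$. I would first record the boundary geometry of $E_{s,t}$. The function $u = (|z_0|^2 + |z_1|^2 + |z_2|^2)/|z_0^2 + z_1^2 + z_2^2|$ is well defined where $z_0^2 + z_1^2 + z_2^2 \ne 0$, satisfies $u \ge 1$ with equality exactly on the real points $\mbb{RP}^2$, and $E_{s,t} = \{s < u < t\}$. Lifting through the local biholomorphism $\psi$ to the two sheeted cover $E^{(2)}_{s,t}$ inside the affine quadric $\cal Q_+ = \{z_0^2 + z_1^2 + z_2^2 = 1\}$, on which $\|z\|^2 = |z_0|^2 + |z_1|^2 + |z_2|^2$ restricts to a strictly plurisubharmonic exhaustion, shows that the outer boundary $\{u = t\}$ is strongly pseudoconvex and that the inner boundary $\{u = s\}$ is strongly pseudoconcave when $s > 1$; when $s = 1$ the latter degenerates to the totally real $\mbb{RP}^2$, exactly as $i\mbb R^2$ arose in Proposition 3.2.

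Suppose first that $1 < s < t < \infty$ and let $\pi : D \ra E_{s,t}$ be proper holomorphic. As in Proposition 3.2, Lemma 2.1 of \cite{V} yields a two dimensional stratum $S \subset \cal L \cap \hat D$ clustering at $p_\infty$; fix $a \in S$ near $p_\infty$ and a neighbourhood $W$ of $a$ to which $\pi$ extends holomorphically, the extension being available because $a$ lies in the envelope of holomorphy. Finite type forces $\dim_{\mbb R}(V_\pi \cap \pa D) \le 1$ for the branching locus $V_\pi$, so $\pi$ is locally biholomorphic on $(W \cap \pa D) \sm V_\pi$. If $\pi(a) \in \{u = t\}$, I would pick a strongly pseudoconcave $a' \in (W \cap \pa D) \sm V_\pi$, whereupon $\pi$ carries a strongly pseudoconcave hypersurface germ biholomorphically onto a piece of the pseudoconvex $\{u = t\}$, which is impossible; a symmetric choice rules out $\pi(a) \in \{u = s\}$. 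The only remaining possibility is that $\pa D$ is weakly pseudoconvex near $p_\infty$, whence \cite{SV} gives $D \backsimeq \ti D$ as in (3.1); this domain is pseudoconvex, so its proper image $E_{s,t}$ would be pseudoconvex, contradicting the strongly pseudoconcave inner boundary.

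When $1 = s < t < \infty$ the inner boundary is the totally real $\mbb{RP}^2$ and I would argue as in the second half of Proposition 3.2. With $a \in S$ as above, $\pi(a)$ cannot lie in $\mbb{RP}^2$: since $V_\pi \cap \pa D$ has real dimension at most one, an open piece of the real hypersurface $W \cap \pa D$ would be mapped locally biholomorphically into the two dimensional $\mbb{RP}^2$, which is absurd; while the pseudoconcave-to-pseudoconvex obstruction of the previous case excludes $\pi(a) \in \{u = t\}$ as soon as strongly pseudoconcave points are present near $a$. Hence $\pa D$ must be weakly pseudoconvex near $p_\infty$, so $D \backsimeq \ti D$ and $\pi$ becomes a proper map $\ti D \ra E_{1,t}$. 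Peaking a holomorphic function $\varphi$ at the point at infinity of $\pa \ti D$, setting $h = \log|\varphi - 1|$ and pushing the plurisubharmonic maximum $\ti h = \max_j \, h \circ \pi_j^{-1}$ down to $E_{1,t}$, the uniqueness theorem produces a point of $\pa \ti D$ whose cluster set meets $\{u = t\}$; across the dense strongly pseudoconvex points $\pi$ extends locally biholomorphically, so Webster's theorem makes $\pi$ algebraic. The branches of $\pi^{-1}$ are then defined off a codimension one variety $Z$, and choosing $p' \in \mbb{RP}^2 \sm Z$ they would map an open piece of $\mbb{RP}^2$ biholomorphically onto a piece of $\pa \ti D$, impossible since $\pa \ti D$ is not totally real.

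Finally, $E^{(2)}_{s,t} \ra E_{s,t}$ and $E^{(4)}_{s,t} \ra E_{s,t}$ are finite holomorphic covering maps, so any biholomorphism of $D$ onto one of them, composed with the covering map, is a proper holomorphic map of $D$ onto $E_{s,t}$, which the first part forbids; this gives the last assertion. Since the skeleton of the argument is identical to Proposition 3.2, I expect the real obstacle to be confined to the first step -- certifying the strong pseudoconvexity and pseudoconcavity of the two boundary components by descending to the Stein quadric $\cal Q_+$ -- together with checking, in the case $s = 1$, that the peak function and Webster algebraicity machinery transfers without change to a target lying in $\mbb P^2$ rather than $\mbb C^2$.
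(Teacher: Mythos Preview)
Your proposal is correct and follows the same architecture as the paper's proof. For $1<s<t$ the arguments coincide verbatim; for $s=1$ there is one variation worth recording. You carry over the plurisubharmonic push-forward $\ti h=\max_j h\circ\pi_j^{-1}$ from Proposition~3.1, whereas the paper instead forms the symmetrised holomorphic product $\ti\phi=\prod_j \phi\circ f_j^{-1}$, which extends across the totally real $\mbb{RP}^2$ to a bounded holomorphic function on all of $E_t$; the maximum principle on $E_t$ then shows $|\ti\phi|<1$ on $\mbb{RP}^2$, giving the stronger statement that \emph{every} point of $\mbb{RP}^2$ lies in the cluster set of some finite point of $\pa\ti D$, and the paper separately invokes \cite{CP} to find a cluster point on $\pa E_{1,t}^+$. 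Both routes reach Webster algebraicity and the final totally-real-onto-$\pa\ti D$ contradiction; the product argument buys an a priori guarantee that the branches of $f^{-1}$ at the chosen $a'\in\mbb{RP}^2\sm Z$ land on the finite part of $\pa\ti D$, a point your version leaves implicit (it is recovered by enlarging $Z$ to include the pole locus of the algebraic inverse correspondence). The $\mbb P^2$ issue you flag is handled exactly as you anticipate: Webster's theorem is applied in an affine chart, and $Z$ is taken as an algebraic subvariety of $\mbb P^2$.
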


\begin{proof}
Let $f : D \ra E_{s, t}$ be a proper holomorphic mapping. Consider the case when $1 < s < t < \infty$. The boundary $\pa E_{s, t}$ has two components, namely
\begin{align*}
\pa E_{s, t}^+ &= \big\{ [z_0: z_1: z_2] \in \mbb P^2 : \vert z_0 \vert^2 + \vert z_1 \vert^2 + \vert z_2 \vert^2 = t \vert z_0^2 + z_1^2 + z_2^2 \vert \big\}, \; {\rm and}\\
\pa E_{s, t}^- &= \big\{ [z_0: z_1: z_2] \in \mbb P^2 : \vert z_0 \vert^2 + \vert z_1 \vert^2 + \vert z_2 \vert^2 = s \vert z_0^2 + z_1^2 + z_2^2 \vert \big\},
\end{align*} 
which are strongly pseudoconvex and strongly pseudoconcave hypersurfaces respectively. The argument used in proposition 3.1 can be applied here to conclude that $p_{\infty} \in \pa D$ 
must be a weakly pseudoconvex point. By \cite{SV} it follows that $D \backsimeq \ti D$ where $\ti D$ is as in (3.1). Thus we have a proper mapping from $\ti D$ onto $E_{s, t}$ which 
implies that $E_{s, t}$ must be holommorphically convex and this is a contradiction.

\medskip

Now suppose that $1 = s < t < \infty$. Then the boundary $\pa E_{1, t}$ consists of a strongly pseudoconvex piece, namely $\pa E_{1, t}^+$ and a maximally totally real piece given by 
$\psi(\pa \mbb B^3 \cap \cal Q_+)$. The argument in the preceeding paragraph applies again to show that $D \backsimeq \ti D$ with $\ti D$ as in (3.1). Let $f$ still denote the proper 
map from $\ti D$ onto $E_{1, t}$. Let $\phi$ be a holomorphic function on $\ti D$ that peaks at the point at infinity in $\pa \ti D$ and denote by $f^{-1}_1, f^{-1}_2, \ldots, f^{-1}_l$ 
the locally defined branches of $f^{-1}$ that exist away from a closed codimension one analytic set in $E_{1, t}$. Then
\[
\ti \phi = (\phi \circ f^{-1}_1) \cdot (\phi \circ f^{-1}_2) \cdots (\phi \circ f^{-1}_l)
\]
is a well defined holomorphic function on $E_{1, t}$ and satisfies $\vert \ti \phi \vert < 1$ there. Now $\ti \phi$ extends across $\psi(\pa \mbb B^3 \cap \cal Q_+)$, which has real 
codimension two and is totally real strata, as well. Thus $\ti \phi \in \cal O(E_t)$ and $\vert \ti \phi \vert \le 1$. If $\vert \ti \phi(a') \vert = 1$ for some $a' \in \psi(\pa \mbb 
B^3 \cap \cal Q_+)$, the maximum principle implies that $\vert \ti \phi \vert \equiv 1$ on $E_{1, t} \subset E_t$ and this is a contradiction. This argument shows that for every $a'\in 
\psi(\pa \mbb B^3 \cap \cal Q_+)$, there is a point $a \in \pa \ti D$ such that the cluster set of $a$ under $f$ contains $a'$. On the other hand, by \cite{CP}, there are points $b, b'$ 
on $\pa \ti D, \pa E_{1, t}^+$ respectively such that the cluster set of $b$ contains $b'$. Thus $f$ will be algebraic by Webster's theorem as before. Away from an algebraic variety $Z 
\subset \mbb P^2$, $f^{-1}$ defines a holomorphic correspondence that locally splits into finitely many holomorphic mappings. Since $Z \cap \psi(\pa \mbb B^3 \cap \cal Q_+)$ has real 
dimension at most one, it is possible to choose $a'\in \psi(\pa \mbb B^3 \cap \cal Q_+) \sm Z$. Now one of the branches of $f^{-1}$ will map $a'$ into $\pa \ti D$ and therefore an 
open piece of the totally real component $\psi(\pa \mbb B^3 \cap \cal Q_+)$ will be mapped locally biholomorphically onto an open piece of $\pa \ti D$. Contradiction. 

\medskip

To conclude, if $D \backsimeq E^{(2)}_{s, t}$ or $E^{(4)}_{s, t}$, then this would imply the existence of an unbranched proper holomorphic mapping from $D$ onto $E_{s, t}$ and this 
cannot happen by the arguments given above.
\end{proof}

\begin{prop}
There cannot exist a proper holomorphic mapping from $D$ onto $E_t$ for all $1 < t < \infty$. In particular, $D$ cannot be equivalent to $E^{(2)}_t$ for all $1 < t < \infty$.
\end{prop}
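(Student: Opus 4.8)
The plan is to mirror the structure of Proposition 3.5, whose argument is almost entirely local near $p_\infty$ together with a peak/uniqueness argument at infinity, and to adapt it to the single strongly pseudoconvex boundary component of $E_t$. Let $f : D \ra E_t$ be a proper holomorphic mapping. The boundary $\pa E_t$ is the single strongly pseudoconvex hypersurface $\big\{ [z_0 : z_1 : z_2] \in \mbb P^2 : \vert z_0 \vert^2 + \vert z_1 \vert^2 + \vert z_2 \vert^2 = t \vert z_0^2 + z_1^2 + z_2^2 \vert \big\}$. First I would invoke Lemma 2.1 of \cite{V} exactly as in the previous propositions to produce a two dimensional stratum $S \subset \cal L \cap \hat D$ clustering at $p_\infty$, pick a point $a \in S$ near $p_\infty$, and extend $f$ holomorphically to a neighbourhood $W$ of $a$. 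Since $\pa D$ is of finite type, the branching locus $V_f \cap \pa D$ has real dimension at most one, so I can select a strongly pseudoconcave point $a' \in (W \cap \pa D) \sm V_f$; then $f$ maps a strongly pseudoconcave open piece locally biholomorphically onto a neighbourhood of $f(a') \in \pa E_t$, which is strongly pseudoconvex, a contradiction. This forces $\pa D$ to be weakly pseudoconvex near $p_\infty$, whence $D \backsimeq \ti D$ by \cite{SV}, with $\ti D$ as in (3.1).

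Having reduced to a proper map $f : \ti D \ra E_t$, I would argue as in the $0 < s < t$ case of Proposition 3.5: since $E_t$ has no pseudoconcave boundary part at all, $\pa E_t$ being globally strongly pseudoconvex, the properness of $f$ would force $E_t$ to be holomorphically convex, and I would derive a contradiction from the fact that $E_t$, being a domain in $\mbb P^2$ whose boundary bounds it from one side of a strongly pseudoconvex hypersurface, is not holomorphically convex. Concretely, since $\ti D$ is pseudoconvex and carries a complete Kobayashi metric, a proper map onto $E_t$ transfers these features: $E_t$ would have to be pseudoconvex, contradicting the presence of the quadric locus $z_0^2 + z_1^2 + z_2^2 = 0$ along whose neighbourhood $E_t$ fails pseudoconvexity.

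The main obstacle I anticipate is verifying cleanly that $E_t$ is \emph{not} holomorphically convex (equivalently, not pseudoconvex) in a way that is robust enough to contradict properness from a pseudoconvex source. Unlike the tube domains $\mathfrak S_{s,t}$, the domain $E_t$ sits inside $\mbb P^2$ and its defining inequality involves the indefinite expression $\vert z_0^2 + z_1^2 + z_2^2 \vert$ in the denominator, so the failure of pseudoconvexity must be located precisely along the intersection of $\pa E_t$ with the quadric cone, which is where the maximally totally real stratum $\psi(\pa \mbb B^3 \cap \cal Q_+)$ degenerated in Proposition 3.5. I would handle this by transporting the peak function argument: let $\phi$ peak at the point at infinity in $\pa \ti D$, form the symmetric product $\ti \phi = \prod_{j=1}^{l} (\phi \circ f^{-1}_j)$ over the local branches of $f^{-1}$ away from a codimension one analytic set, obtain $\ti \phi \in \cal O(E_t)$ with $\vert \ti \phi \vert \le 1$, and use the maximum principle together with the strong pseudoconvexity and density of such points on $\pa \ti D$ to conclude via Webster's theorem that $f$ is algebraic; the finite generic fibre cardinality then contradicts the geometry, exactly as at the end of Proposition 3.5.

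Finally, since $E_t^{(2)}$ inherits its complex structure from $E_t$ through the two sheeted covering $\psi$, any biholomorphism $D \backsimeq E_t^{(2)}$ would yield $\pi = \psi \circ f : D \ra E_t$, an unbranched proper holomorphic mapping between domains with the standard complex structure, which cannot exist by the argument above. This closes the proof.
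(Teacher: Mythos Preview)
Your argument has a genuine gap at the decisive step: you assume that $E_t$ is \emph{not} holomorphically convex and try to derive a contradiction from that, but in fact $E_t$ \emph{is} holomorphically convex. Note that the two--sheeted cover $E_t^{(2)} = \{ \vert z_0 \vert^2 + \vert z_1 \vert^2 + \vert z_2 \vert^2 < t \} \cap \cal Q_+$ is the intersection of a ball with the smooth affine quadric $\cal Q_+$, hence Stein; equivalently, in local coordinates $E_t$ is a sublevel set of a strongly plurisubharmonic function. In particular the quadric cone $\{ z_0^2 + z_1^2 + z_2^2 = 0 \}$ does not meet $\ov{E_t}$ at all (on the cone the defining inequality reads $\vert z_0 \vert^2 + \vert z_1 \vert^2 + \vert z_2 \vert^2 < 0$), so there is no ``totally real stratum'' here to play the role it did for $E_{1,t}$, and your peak--function/Webster argument has nothing to contradict: algebraicity of a proper map $\ti D \to E_t$ with finite generic fibre is perfectly consistent with the geometry.

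The paper exploits the holomorphic convexity of $E_t$ in the opposite direction. A proper map $f : D \to E_t$ forces $D$ to be pseudoconvex; then (using the Kobayashi metric, which is legitimate since $E_t$ is strongly pseudoconvex) $f$ extends continuously up to $\pa D$ near $p_{\infty}$. The boundary $\pa E_t$ is spherical, so by \cite{CPS} the point $p_{\infty}$ must be weakly spherical, i.e., $\pa D$ has a local defining function $2 \Re z_2 + \vert z_1 \vert^{2m} + \ldots$. Now \cite{SV} applies and gives $D \backsimeq \{ 2 \Re z_2 + \vert z_1 \vert^{2m} < 0 \}$, whose automorphism group is four--dimensional. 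This contradicts the standing hypothesis of Section~3 that $\dim {\rm Aut}(D) = 3$. Your first reduction (ruling out pseudoconcave points near $p_{\infty}$ via Lemma~2.1 of \cite{V}) is not wrong, but it is unnecessary: pseudoconvexity of $D$ already follows from that of $E_t$. The part of the argument you are missing is the use of \cite{CPS} and the dimension count, not a non--pseudoconvexity contradiction.
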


\begin{proof}
By working in local coordinates it can be seen that $E_t$ is described as a sub-level set of a strongly plurisubharmonic function. Hence $E_t$ must be holomorphically convex and 
therefore $D$ is pseudoconvex if there were to exist a proper map $f : D \ra E_t$. By standard arguments involving the Kobayashi metric, this map $f$ will be continuous up to $\pa D$ 
near $p_{\infty}$. By \cite{CPS} it follows that $p_{\infty}$ is a weakly spherical point on $\pa D$, i.e., there is a defining function for $\pa D$ near $p_{\infty} = 0$ of the form
\[
\rho(z) = 2 \Re z_2 + \vert z_1 \vert^{2m} + \ldots.
\]
Since $p_{\infty}$ is an orbit accumulation point, \cite{SV} shows that $D$ is equivalent to the model domain at $p_{\infty}$, i.e.,
\[
D \backsimeq \big\{ (z_1, z_2) \in \mbb C^2 : 2 \Re z_2 + \vert z_1 \vert^{2m} < 0 \big\}.
\]
This shows that $\dim {\rm Aut}(D) = 4$ which is a contradiction. To conclude, if $D \backsimeq E^{(2)}_t$, then there would exist an unbranched proper mapping from $D$ onto $E_t$ which 
is not possible.
\end{proof}


\subsection{Domains constructed by using an analogue of Rossi's map} For $-1 \le s < t \le 1$ let
\[
\Om_{s, t} = \big\{ (z_1, z_2) \in \mbb C^2 : s \vert z_1^2 + z_2^2 - 1 \vert < \vert z_1 \vert^2 + \vert z_2 \vert^2 - 1 < t \vert z_1^2 + z_2^2 - 1 \vert \big\}
\]
and for $-1 < t < 1$ let
\[
\Om_t = \big\{ (z_1, z_2) \in \mbb C^2 : \vert z_1 \vert^2 + \vert z_2 \vert^2 - 1 < t \vert z_1^2 + \z_2^2 - 1 \vert  \big\}.
\]
It was shown in \cite{I1} that $\Om_t$ has a unique maximally totally real ${\rm Aut}(\Om_t)^c$-orbit, namely
\[
\cal O_5 = \big\{ (\Re z_1, \Re z_2) \in \mbb R^2 : (\Re z_1)^2 + (\Re z_2)^2 < t \big\}
\]
for all $t \in (-1, 1)$. Moreover $\Om_t = \Om_{-1, t} \cup \cal O_5$ for all $t \in (-1, 1)$. 

\medskip

For $1 \le s < t \le \infty$ let
\[
D_{s, t} = \big\{ (z_1, z_2) \in \mbb C^2 : s \vert 1 + z_1^2 - z_2^2 \vert < 1 + \vert z_1 \vert^2 - \vert z_2 \vert^2 < t \vert 1 + z_1^2 - z_2^2 \vert, \Im(z_1(1 + \ov z_2)) > 0 
\big\}
\]
where it is assumed that the domain $D_{s, \infty}$ does not contain the complex curve
\[
\cal O = \big\{ (z_1, z_2) \in \mbb C^2 : 1 + z_1^2 - z_2^2 = 0, \Im(z_1(1 + \ov z_2)) > 0 \big\}.
\]
For $1 \le s < \infty$ let
\[
D_s = \big\{ (z_1, z_2) \in \mbb C^2 : s \vert 1 + z_1^2 - z_2^2 \vert < 1 + \vert z_1 \vert^2 - \vert z_2 \vert^2, \Im(z_1(1 + \ov z_2)) > 0 \big\}
\]
and note that $D_s = D_{s, \infty} \cup \cal O$.

\medskip

Observe that $D$ cannot be equivalent to $\Om_{s, t}$ or $D_{s,t}$ as neither is simply connected. It remains to consider whether $D$ can be equivalent to $\Om_t$ or $D_s$.

\begin{prop}
There cannot exist a proper holomorphic mapping from $D$ onto $\Om_t$ for $-1 < t < 1$ or to $D_s$ for $1 \le s < \infty$.
\end{prop}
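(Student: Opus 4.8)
The plan is to follow the same template that was successful in Propositions 3.1, 3.3 and 3.4: assume a proper holomorphic map exists, use the local stratification of $\pa D$ near $p_\infty$ together with the pseudoconvex/pseudoconcave structure of the boundary pieces of the target to force $\pa D$ to be weakly pseudoconvex near $p_\infty$, invoke \cite{SV} to identify $D \backsimeq \ti D$ with $\ti D$ as in (3.1), and then derive a contradiction from the geometry of the target. First I would handle $\Om_t$ for $-1 < t < 1$. The boundary $\pa\Om_t = \pa\Om_{-1,t}$ (the hypersurface where the right inequality becomes equality) is strongly pseudoconvex on one face and strongly pseudoconcave on the other; exactly as in Proposition 3.1, picking a point $a$ on the two-dimensional Levi-flat stratum $S \subset \cal L \cap \hat D$ clustering at $p_\infty$ (Lemma 2.1 of \cite{V} supplies this locally), and choosing a point $a'$ off the branching locus $V_\pi$ of $\pi$, one gets a local biholomorphism from a strongly pseudoconcave piece of $\pa D$ onto a strongly pseudoconvex piece of $\pa\Om_t$, a contradiction. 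Hence $\pa D$ is weakly pseudoconvex near $p_\infty$, so $D \backsimeq \ti D$.

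Once $\ti D$ covers (or maps properly onto) $\Om_t$, I would exploit the special totally real orbit $\cal O_5 = \{(\Re z_1, \Re z_2) : (\Re z_1)^2 + (\Re z_2)^2 < t\}$ and the decomposition $\Om_t = \Om_{-1,t} \cup \cal O_5$. This is precisely analogous to the role played by the totally real piece $\psi(\pa\mbb B^3 \cap \cal Q_+)$ in Proposition 3.3 and by $i\mbb R^2$ in Proposition 3.1. The idea is to use a peak function $\phi$ at the point at infinity in $\pa\ti D$, form a suitable plurisubharmonic or holomorphic invariant $\ti\phi$ built from the finitely many local branches $f^{-1}_1,\dots,f^{-1}_l$ of $f^{-1}$, extend it across the totally real stratum $\cal O_5$ (which has real codimension two) by the removable-singularity/uniqueness mechanism already used above, and conclude that some point of $\pa\ti D$ has cluster set meeting the strongly pseudoconvex face $\pa\Om_{-1,t}$. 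Then Webster's theorem makes $f$ algebraic, so away from a codimension-one algebraic variety $Z$ the correspondence $f^{-1}$ splits into finitely many holomorphic maps; choosing $a' \in \cal O_5 \sm Z$ (possible since $Z \cap \cal O_5$ has real dimension at most one), a branch of $f^{-1}$ maps an open piece of the totally real $\cal O_5$ locally biholomorphically onto an open piece of $\pa\ti D$, which is impossible because $\pa\ti D$ is nowhere totally real. The case of $D_s = D_{s,\infty} \cup \cal O$ runs identically, with the complex curve / totally real orbit $\cal O$ and the face where $s\vert 1 + z_1^2 - z_2^2\vert$ meets $1 + \vert z_1\vert^2 - \vert z_2\vert^2$ playing the corresponding roles.

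The main obstacle, and the step I expect to require the most care, is the construction and extension of the auxiliary invariant function $\ti\phi$ across the special orbit in each target. In Proposition 3.1 the totally real set was the flat $i\mbb R^2$ and in Proposition 3.3 it was $\psi(\pa\mbb B^3 \cap \cal Q_+)$; here $\cal O_5$ (respectively $\cal O$) is embedded in a genuinely curved target defined by the modulus $\vert z_1^2 + z_2^2 - 1\vert$ (respectively $\vert 1 + z_1^2 - z_2^2\vert$), so I must verify that $\ti\phi$ is still plurisubharmonic with the right boundary behaviour and that the totally real stratum truly has real codimension two so the removability argument applies. I would also need to confirm, via the explicit description in \cite{I1}, that $\cal O_5$ and $\cal O$ are maximally totally real and that the remaining boundary faces carry the expected definite Levi signature; granting these local facts, the algebraicity argument via Webster's theorem and the final local-biholomorphism contradiction go through exactly as before. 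The remaining ingredient — continuity of $\pi$ (or $f$) up to $\pa\ti D$ near a strongly pseudoconvex boundary point and the existence of such a point with cluster set meeting $\pa\Om_{-1,t}$ — is supplied by \cite{CP} and the standard Kobayashi-metric arguments already cited in the preceding propositions.
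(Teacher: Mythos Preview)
Your reduction to $D \backsimeq \ti D$ is in the right spirit and matches the paper's first step (modulo your mis-description of $\pa\Om_t$ as two-faced; in fact $\pa\Om_t \sm \pa\cal O_5$ is a single strongly pseudoconvex hypersurface and $\pa\cal O_5$ is a one-dimensional circle, but the Levi-signature contradiction still goes through). The genuine gap is in your endgame. The orbit $\cal O_5$ is \emph{not} a boundary piece of $\Om_t$: the decomposition $\Om_t = \Om_{-1,t}\cup\cal O_5$ means $\cal O_5$ sits in the \emph{interior} of $\Om_t$. Likewise $\cal O$ is a complex curve (hence certainly not totally real) in the interior of $D_s$. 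So there is nothing to extend $\ti\phi$ across, and the branches of $f^{-1}$ send points of $\cal O_5$ (respectively $\cal O$) into $\ti D$, not onto $\pa\ti D$; your final ``totally real piece mapped onto $\pa\ti D$'' contradiction simply does not arise. The analogy with $i\mbb R^2\subset\pa\mathfrak S_{0,t}$ and $\psi(\pa\mbb B^3\cap\cal Q_+)\subset\pa E_{1,t}$ fails because those were genuine boundary strata.

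The paper's route after $D\backsimeq\ti D$ is quite different: from the two commuting one-parameter subgroups of ${\rm Aut}(\ti D)$ with $[X,Y]=X$ it invokes Proposition~4.1 of \cite{V} to upgrade to $D\backsimeq\cal D_4$, then uses \cite{CP} to find $b\in\pa\cal D_4$ whose cluster set meets the strongly pseudoconvex part of $\pa\Om_t$, and finally \cite{CPS} forces $b$ to be weakly spherical --- which no point of $\pa\cal D_4$ is. For $D_s$ with $s>1$ the paper runs the same ``weakly spherical'' argument after analysing the three boundary components $\cal C^1,\cal C^2,\cal C^3$ (one strongly pseudoconvex, one Levi flat, one lower-dimensional). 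You also omit the case $s=1$, which the paper treats separately via Isaev's proper map $g:\Delta^2\to D_1$, pulling back to a proper correspondence $\Delta^2\to\ti D$ and using peak functions plus the product structure of the bidisc (Rudin) for the contradiction.
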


\begin{proof}
We first consider $\Om_t$. Let $z_1 = x + iy, z_2 = u + iv$ so that 
\[
\cal O_5 = \big\{ (x, u) \in \mbb R^2 : x^2 + u^2 < 1 \big\}
\]
and its boundary
\[
\pa \cal O_5 = \big\{ (x, u) \in \mbb R^2 : x^2 + u^2 = 1 \big\} \subset \pa \Om_t
\]
for all $t \in (-1, 1)$. Note that $\pa \Om_t \sm \pa \cal O_5$ is a smooth strongly pseudoconvex hypersurface. Suppose that $f : D \ra \Om_t$ is proper. As in proposition 3.1, it is 
possible to choose $a \in S \subset \cal L$ such that $f$ extends holomorphically to a neighbourhood of $a$. By shifting $a \in S$ if necessary we may assume that $f$ is in fact 
locally biholomorphic near $a$. Note that $f(a) \notin \pa \Om_t \sm \pa \cal O_5$, as otherwise there are 
strongly pseudoconcave points near $a$ that will be mapped to strongly pseudoconvex points. The remaining possibility is that $f(a) \in \pa \cal O_5$ which is totally real. Since $f$ is 
locally biholomorphic near $a$, $f$ cannot map an open piece of $\pa D$ near $a$ into $\pa \cal O_5$. Again, there are strongly pseudoconcave points near $a$ that are mapped by $f$ to 
$\pa \Om_t \sm \pa \cal O_5$ which is strongly pseudoconvex and this is a contradiction.

\medskip

Hence the boundary $\pa D$ is weakly pseudoconvex near $p_{\infty}$ and thus $D \backsimeq \ti D$ by \cite{SV} where $\ti D$ is as in (3.1). Let $f : \ti D \ra \Om_t$ still denote the 
biholomorphism. Observe that the automorphism group of $\ti D$ is at least two dimensional; apart from the translations $T_t$, it is also invariant under the one parameter subgroup
\[
S_s(z_1, z_2) = (\exp(s/2m)z_1, \exp(s) z_2),
\]
$s \in \mbb R$. The corresponding real vector fields $X = \Re (i \pa / \pa z_2)$ and $Y = \Re((z_1/2m) \pa / \pa z_1 + z_2 \pa / \pa z_2)$ satisfy $[X, Y] = X$. By the arguments in the 
last part of the proof of proposition 4.1 in \cite{V}, it follows that $D \backsimeq \cal D_4$ where
\[
\cal D_4 = \big\{ (z_1, z_2) \in \mbb C^2 : 2 \Re z_2 + (\Re z_1)^{2m} < 0 \big\}.
\]
Let $f : \cal D_4 \ra \Om_t$ still denote the proper map. Choose an arbitrary strongly pseudoconvex point $b' \in \pa \Om_t \sm \pa \cal O_5$. By \cite{CP} there exists $b \in \pa \cal 
D_4$ such that the cluster set of $b$ under $f$ contains $b'$. Then by well known arguments involving the Kobayashi metric on $\cal D_4$ and $\Om_t$ near $b$ and $b'$ respectively, it 
follows that $f$ is continuous up to $\pa \cal D_4$ near $b$ and $f(b) = b'$. By \cite{CPS}, it follows that $b \in \pa \cal D_4$ must be a weakly spherical point, i.e., there exists a 
coordinate system near $b$ in which the defining equation for $\pa \cal D_4$ is of the form
\[
\rho(z) = 2 \Re z_2 + \vert z_1 \vert^{2m} + \ldots,
\] 
the dots indicating terms of higher order. However, the explicit form of $\pa D_4$ shows that no point on it is weakly spherical.

\medskip

It remains to show that no proper map $f : D \ra D_s$ can exist for $1 \le s < \infty$. Suppose the contrary. Observe that if $s > 1$ then $\pa D_s$ is the disjoint union of three 
components, namely
\begin{align*}
\cal C^1 &= \big\{ 1 + \vert z_1 \vert^2 - \vert z_2 \vert^2 = s \vert 1 + z_1^2 - z_2^2 \vert, \; \Im(z_1(1 + \ov z_2)) > 0 \big\},\\
\cal C^2 &= \big\{ 1 + \vert z_1 \vert^2 - \vert z_2 \vert^2 > s \vert 1 + z_1^2 - z_2^2 \vert, \; \Im(z_1(1 + \ov z_2)) = 0 \big\},\\
\cal C^3 &= \big\{ 1 + \vert z_1 \vert^2 - \vert z_2 \vert^2 = s \vert 1 + z_1^2 - z_2^2 \vert, \; \Im(z_1(1 + \ov z_2)) = 0 \big\}.
\end{align*}
\no Note that $\cal C^1$ is a strongly pseudoconvex hypersurface and that $\Im(z_1(1 + \ov z_2)) = 0$ has an isolated singularity at $(z_1, z_2) = (0, -1)$ away from which it is smooth 
Levi flat. Also, $(0, -1) \notin \cal C^2$ as $s > 1$. As above, choose $a \in S \subset \cal L$ near which $f$ extends locally biholomorphically. Since $\cal C^1$ is strongly 
pseudoconvex, it follows that $f(a) \notin \cal C^1$. Further if $f(a) \in \cal C^2$, then a small open piece of $\pa D$ near $a$ will be mapped locally biholomorphically into the 
Levi flat piece $\big\{ \Im(z_1(1 + z_2)) = 0 \big\}$ and this is a contradiction as points on $\pa D \sm S$ near $a$ are Levi non-degenerate. The remaining possibility is that $f(p) 
\in \cal C^3$. However, an open piece of $\pa D$ near $a$ cannot be mapped by $f$ into $\cal C^3$ as it has real dimension at most $2$ near each of its points. Thus there is an 
open dense set of points near $a$ that are mapped locally biholomorphically into either $\cal C^1$ or $\cal C^2$. Both cannot occur for reasons mentioned above. Thus $\pa D$ must be 
weakly pseudoconvex near $p_{\infty}$ and we may now argue as before to get a contradiction.

\medskip

When $s = 1$, it was noted in \cite{I1} that there is a proper mapping $g$ from the bidisc $\Delta^2$ onto $D_1$. If $f : D \ra D_1$ is proper, then $F : f^{-1} \circ g : \Delta^2 \ra 
D$ is a proper holomorphic correspondence. Thus $D$ is pseudoconvex and by \cite{SV}, it follows that $D \backsimeq \ti D$ where $\ti D$ is as in (3.1). Let $F : \Delta^2 \ra \ti D$ 
still denote the proper correspondence. Using the holomorphic function on $\ti D$ that peaks at the point at infinity in $\pa \ti D$ it can be seen that there is an open dense subset of 
$\pa \Delta^2$ whose cluster set under $F$ intersects the finite part of $\pa \Delta^2$ -- call this subset $\Gamma$. Fix $\z_0 \in \Gamma$ and a small neighbourhood $W$ containing 
it such that $W \cap \pa \Delta^2$ is smooth. Note that $W \cap \pa \Delta^2$ is defined as the zero locus of either $\vert z_1 \vert^2 - 1$ or $\vert z_2 \vert^2 - 1$ both of which are 
plurisubharmonic. Now well known arguments using the branches of $F^{-1}$, these plurisubharmonic defining equations and a suitable version of the Hopf lemma show that
\[
{\rm dist}(F(z), \pa \ti D) \lesssim {\rm dist}(z, \pa \Delta^2)
\]
whenever $z \in W \cap \Delta^2$ -- here $F(z)$ denotes any one of the finitely many branches of $F$. By \cite{BS} it follows that $F$ extends continuously up to $W \cap \pa \Delta^2$ 
as a correspondence. The branching locus of $F$ in $\Delta^2$ is therefore defined by a holomorphic function in $\Delta^2$ that extends continuously up to $W \cap \pa \Delta^2$. Let $h \in 
\cal O(\Delta^2)$ define the branching locus. If $h \equiv 0$ on $W \cap \pa \Delta^2$, the uniqueness theorem shows that $h \equiv 0$ in $\Delta^2$ which cannot happen. By shifting 
$\z_0$ we may assume that $h(\z_0) \not= 0$. Therefore near $\z_0$ the correspondence $F$ splits into well defined holomorphic functions, say $F_1, F_2, \ldots, F_k$ each of which is 
holomorphic on $W$ (shrink $W$ if needed) and continuous up to $W \cap \pa \Delta^2$. Since $W \cap \Delta^2$ is a product domain and each point of $\pa \ti D$ supports a holomorphic 
peak function, arguments from \cite{Ru} show that these branches $F_1, F_2, \ldots, F_k$ must be independent of either $z_1$ or $z_2$. This contradicts the assumption that $F$ is 
proper.
\end{proof}

\no It is also possible to construct finite and infinite sheeted covers of $D_{s, t}, \Om_{s, t}$ as explained in \cite{I1}. That $D$ cannot be equivalent to any of them follows by 
similar arguments and we omit the details.

\medskip

Finally proposition 4.1 of \cite{V} shows that a bounded domain $D \subset \mbb C^2$ that satisfies the hypotheses of the main theorem and admits a Levi flat ${\rm Aut}(D)^c$-orbit must 
be equivalent to
\[
\cal D_4 = \big\{ (z_1, z_2) \in \mbb C^2 : 2 \Re z_2 + (\Re z_1)^{2m} < 0 \big\}.
\]
The proof is purely local and can be applied here as well to conclude that a domain $D \subset X$ as in the main theorem with a Levi flat ${\rm Aut}(D)^c$-orbit must be equivalent to
$\cal D_4$.  This is the only possibility that remains after eliminating all others and the conclusion is that if $\dim {\rm Aut}(D)=3$ then $D \backsimeq \cal D_4$.


\section{Model domains when ${\rm Aut}(D)$ is four dimensional}

\no Of the 7 isomorphism classes listed in \cite{I2} of hyperbolic surfaces with four dimensional automorphism group, the following cannot be equivalent to $D$ for topological reasons.

\begin{itemize}
\item The spherical shell $S_r = \big\{ z \in \mbb C^2 : r < \vert z \vert < 1 \big\}$ for $0 \le r < 1$ -- the automorphism group here is the unitary group $U_2$ which is compact, or 
the  quotient $S_r/ \mbb Z_m$ for some $m \in \mbb N$, none of which are simply connected.
\item $\cal E_{r, \theta} = \big\{ (z_1, z_2) \in \mbb C^2 : \vert z_1 \vert < 1, r(1 - \vert z_1 \vert^2)^{\theta} < \vert z_2 \vert < (1 - \vert z_1 \vert^2)^{\theta} \big\}$, where 
$\theta \ge 0, 0 < r < 1$ or $\theta < 0, r = 0$. This is not simply connected.
\item $D_{r, \theta} = \big\{ (z_1, z_2) \in \mbb C^2 : r \exp(\theta \vert z_1 \vert^2) < \vert z_2 \vert < \exp(\theta \vert z_1 \vert^2) \big\}$, where $\theta = 1, 0 < r < 1$ or 
$\theta = -1, r = 0$. This is again not simply connected.
\end{itemize}

\no The remaining four classes listed below have a common feature that a large part of their boundary, if not the whole, is spherical. 

\begin{itemize}
\item $\Om_{r, \theta} = \big\{ (z_1, z_2) \in \mbb C^2 : \vert z_1 \vert < 1, r(1 - \vert z_1 \vert^2)^{\theta} < \exp(\Re z_2) < (1 - \vert z_1 \vert^2)^{\theta} \big \}$, where 
$\theta = 1, 0 \le r < 1$ or $\theta = -1, r = 0$
\item $\mathfrak S = \big\{ (z_1, z_2) \in \mbb C^2 : -1 + \vert z_1 \vert^2 < \Re z_2 < \vert z_1 \vert^2 \big\}$.
\item $\cal E_{\theta} = \big\{ (z_1, z_2) \in \mbb C^2 : \vert z_1 \vert < 1, \vert z_2 \vert < (1 - \vert z_1 \vert^2)^{\theta} \big\}$, for $\theta < 0$. Here the boundary $\pa \cal 
E_{\theta}$ contains a Levi flat piece $L = \big\{ \vert z_1 \vert = 1 \big\} \times \mbb C_{z_2}$. Away from $L$, $\pa \cal E_{\theta}$ is spherical and strongly pseudoconcave as seen 
from $\cal E_{\theta}$
\item $E_{\theta} = \big\{ (z_1, z_2) \in \mbb C^2 : \vert z_1 \vert^2 + \vert z_2 \vert^{\theta} < 1 \big\}$, where $\theta > 0$ and $\theta \not= 2$. 
\end{itemize}

To see that $D$ cannot be equivalent to $\Om_{r, \theta}, \mathfrak S$ or to $\cal E_{\theta}$, suppose the contrary. Let $f : D \ra \mathcal E_{\theta}$ be biholomorphic. Let $p \in 
\pa D$ be a strongly pseudoconcave point near $p_{\infty}$ across which $f$ extends locally biholomorphically. Note that $f(p) \notin L$ as $\pa D$ is of finite type near $p_{\infty}$. 
Then $f(p) \in \pa \cal E_{\theta}$. Let $g$ be a local biholomorphism defined on a open neighbourhood $W$ of $f(p)$ that takes $W \cap \pa \cal E_{\theta}$ into $\pa \mbb B^2$. Then 
$g \circ f$ is a biholomorphic germ at $p$ that maps an open piece of $\pa D$ into $\pa \mbb B^2$. By \cite{Sh}, this germ can be analytically continued along all paths in $U \cap \pa 
D$ that start at $p$. Thus $p_{\infty}$ must be a weakly pseudoconvex point and by \cite{CPS}, it must be weakly spherical as well. By \cite{SV}, it follows that $D \backsimeq E_{2m}$ 
and so $\cal E_{\theta} \backsimeq E_{2m}$ which is a contradiction.

\medskip

To conclude, it remains to show that if $D \backsimeq E_{\theta}$, then $\theta = 2m$ for some integer $m \ge 2$. Proposition 5.1 in \cite{V} remains valid here too and we omit the 
details. The conclusion is that if $\dim {\rm Aut}(D) = 4$ then $D \backsimeq E_{2m} \backsimeq \cal D_5$.


\end{document}